\documentclass[12pt]{amsart}
%% optional packages, in case you want AMS math macros and AMS symbols
\usepackage{amssymb,amsmath,amsfonts,amsthm,amscd,enumerate}

%% optional package, in case you want PostScript graphics
\usepackage{graphics}
\usepackage[colorlinks=true]{hyperref}
\usepackage{amssymb,amsmath,amsfonts,amsthm,amscd,mathrsfs}

\input xy
\xyoption{all}
%-----------------
%\usepackage{setspace}
%\usepackage{theorem}
%\usepackage{dsfont}
%---------------------

%\begin{document}

\newcommand{\CC}{\mathbb C}
\newcommand{\RR}{\mathbb R}

\newcommand{\maD}{\mathcal{D}}

\newcommand{\ZZ}{\mathbb{Z}}
\newcommand{\Sch}{\mathscr{S}}
\newcommand{\NN}{\mathbb{N}}

\newcommand{\maM}{\mathcal{M}}

\newcommand{\maG}{\mathcal{G}}

\newcommand{\maU}{\mathcal{U}}
\newcommand{\maL}{\mathcal{L}}

\newcommand{\smooth}{\mathcal{C}^{\infty}}

\newcommand{\Op}[1]{\operatorname{#1}}
\newcommand{\ep}{\varepsilon}
\newcommand{\smoothing}{\Psi^{\!-\!\infty}}

\newcommand{\ip}[1]{\langle #1 \rangle}

\newcommand{\gp}[1]{\Gamma^{\infty}(#1)}
%\newcounter{theonum}[\Roman{chapter}]
\newcommand{\heis}{\underset{H_n}{*}}
\newcommand{\compsupp}{\mathcal{E}'}

\theoremstyle{definition}
\newtheorem{definition}{Definition}[section]

\theoremstyle{plain}
\newtheorem{theo}[definition]{Theorem}
\newtheorem{prop}[definition]{Proposition}
\newtheorem{lem}[definition]{Lemma}
\newtheorem{cor}[definition]{Corollary}

\theoremstyle{remark}
\newtheorem{remark}[definition]{Remark}
\newtheorem{eg}[definition]{Example}
%\renewcommand{\theequation}{definition}

%%%%%%%%%%%%%%%%%%%%%%%%%%%%%%%%%
%------------------------------------------------------------------------------
%%%%%%%%%%%%%%%%%%%%%%%%%%%%%%%%%%%%
\begin{document}

\textcolor{red}{
 \title[]{Rapidly converging approximations and regularity theory}}
\author{Shantanu Dave}
\address{University of Vienna, Faculty of Mathematics, Nordbergstrasse 15, 1090 Vienna, Austria}
\email{shantanu.dave@univie.ac.at}
\thanks{Supported by FWF grant Y237-N13 of the Austrian Science Fund.}

\maketitle
\begin{abstract}
We consider distributions on a closed compact manifold $M$ as  maps on smoothing operators. Thus spaces of maps between $\smoothing(M)\rightarrow  \smooth(M)$  are   considered as generalized functions. For any collection of regularizing processes we produce an algebra of generalized functions  and a diffeomorphism  equivariant embedding of distributions into this  algebra. We  provide examples   invariant under  certain group actions. The regularity  for such generalized functions is provided in terms of a certain tameness of maps between graded Frech\'et spaces. This  notion of regularity  implies the  regularity in Colombeau algebras in the $\maG^{\infty}$ sense. 
\end{abstract}

\section{Introduction}
Regularization of nonsmooth structures such as  distributions and  discontinuous metrics  by smooth approximates has been an important ingredient of many problems in mathematics and physics. The choice of regularizing process is often dictated by their interaction with different operators involved  and their symmetries.  In numerical processes one desires that the regularizations converge optimally.

By a regularizing approximation we mean a net of smoothing operator $T_{\ep}$ such that $T_{\ep}u$ is a smooth function  and $\lim_{\ep\rightarrow 0}T_{\ep}u=u$.
By fixing  the asymptotic properties of a regularizing approximation one can study the regularity and singularity of the  nonsmooth objects 
 in terms of asymptotic behaviour of the  approximation.

 Here we shall consider regularizing approximations that are in a sense optimal in view of Lorant Schwartz's theorem that states the impossibility of constructing an associative product on distributions consistent with continuous functions.
We shall call such approximations  \emph{rapidly converging approximations}.

Let $M$ be a compact  Riemannian manifold. The Weyl's asymptotic formula provides an asymptotic estimate on the growth of eigenvalues of the  associated Laplace operator $\Delta$ on $M$.  For a Schwartz function $F(x)\in\Sch(\RR)$ let $F_{\ep}(x)=F(\ep x)$. Then  for a suitable choice of Schwartz function $F$ the  net of smoothing operator $T_{\ep}=F_{\ep}(\Delta)$ provides the basic example of  rapidly converging approximations in view of Weyl's theorem. In the special case of compact Lie groups $T_{\ep}$ can be  obtained in terms of convolution  with characters of irreducible representations directly from Peter-Weyl theorem.

 In order to study regularity of a distribution $u\in\maD'(M)$ in terms of the approximation $T_{\ep}u$ one is naturally led to consider  it as a map from smoothing operators $\smoothing(M)$ to $\smooth(M)$, \[\Theta_u:\smoothing(M)\rightarrow \smooth(M)\quad \Theta_u(T):=T(u)\,\forall\,\,T\in\smoothing(M).\]
Here the  Sobolev regularity of $u$ can be interpreted in terms of  degree of tameness  of the map $\Theta_u$ with respect to certain grading on the two Frech\'et spaces $\smooth(M)$ and $\smoothing(M)$. For example a smooth function $f$  provides a map $\Theta_f$  which is tame  for all possible degrees of tameness and this character  classifies all  smooth maps.  This point of view immediately provides us with a natural way to study local regularity of $u$. For instance one can easily identify singular support and the wavefront set of $u$ either from $\Theta_u$ or $T_{\ep}u$.

This leads one to consider a general Frech\'et  smooth  map \[\phi:\smoothing(M)\rightarrow \smooth(M)\] as a generalized  function space for nonlinear  operators. The action of diffeomorphisms and pseudodifferential operators on $\maD'(M)$ can naturally be extended to  the space $E$ of all such maps. The pointwise algebra structure on $E$ obtained from the algebra $\smooth(M)$  provides meaningful subalgebras which correspond to regularity of a generalized function $\phi\in E$. 

 For instance the regularity of such generalized functions  can be measured in two different ways.  Either we can modify the  notion of degree of tameness in which case pseudodifferential operator change the degree of tameness  analogous to their mapping properties on Sobolev spaces.  Or else  we can consider maps with certain asymptotic behaviour for the smooth functions  $\phi(T_{\ep})$ where we choose  the rapidly  converging  approximations $T_{\ep}$ to belong to a fixed set $\maL$ invariant under symmetries of a given problem. 

  The fact that the net of operators $T_{\ep}$ induce a sheaf-morphism  from the sheaf of distributions $\maD'(M)$ to the sheaf $\maG^s(M)$ of special algebra of Colombeau with smooth dependence of parameter plays an important role 
in the present interpretation of  local regularity properties of distributions and more generally for generalized functions in $E$. 

In Euclidian space rapidly converging approximations can be constructed from convolution with a  net of mollifier  converging appropriately to the delta distributions (see \cite{c1}). Another such construction can be carried out  on $\mathbb{R}^{2n+1}$  by convolution by a mollifier $\rho_{\ep}$ on the Heisenberg group. These further yield  rapidly converging approximations on $\RR^n$ by  apply Fourier transform  with respect to the  Schr\"odinger representation to each $\rho_{\ep}$. We note that although each of the above approximations characterize local regularity   of distributions in much the same way, they tend to isolate different properties of global regularity and the growth at infinity.  This is  because they preserve different large scale structure on $\RR^n$. In the present article we shall study only the local behaviour of regularity.

\section{Preliminaries}\label{pre}
Let $X$ be a locally convex (Hausdorff) topological vector space then one can associate
a generalized locally convex space $\maG_X$ (see \cite{Garetto}) as follows.
Let $I$ be the interval $(0,1)$. Define the smooth moderate nets on $X$ to be  smooth maps 
\[I\rightarrow X\quad\ep\rightarrow x_{\ep}\] 

such that for all continuous semi-norms $\rho$ there exists  an integer $N$
such that
\begin{eqnarray}\label{asympt}
|\rho(x_{\epsilon})|\sim O(\epsilon^N)  
\quad \textrm{ as }\, \,\epsilon \rightarrow
 0
\end{eqnarray}
Here as usual  by $f(\ep)\sim  O(g(\ep))$ as $\ep\rightarrow 0$ we mean there exists an  $\ep_0>0$ and a constant $C>0$ such that  $f(\ep)<Cg(\ep)$  for $\ep<\ep_0$.
 We denote the set of all  moderate smooth nets on $X$ by  $E(X)$.
 Similarly we can define the negligible nets to be the smooth maps $x_{\ep }$ such that
for all continuous seminorms $\rho$ and for all $N$  Equation \eqref{asympt} holds. We  shall denote the set  of all smooth negligible nets by $N(X)$.

 The generalized locally convex space over $X $ is then defined to be the quotient,
\[\maG_X:=E(X)/N(X).\]

One notes that in defining $E(X)$ and $N(X) $  it suffices to restrict to a family of
seminorms that generate the locally convex topology on $X$. If  $x_{\ep}$ is a moderate net in $E(X)$ then  the element it represents in the  quotient $\maG_X$  shall be represented by  $\ip{x_{\ep}}$.

When $X=\smooth(M)$ is the space of smooth functions on a manifold $M$ then we
also represent $\maG^s(M):=\maG_{\smooth(M)}$.
Also for $X=\CC$ the space $\maG_{\CC}$ inherits a  ring structure from $\CC$ and we call it  the space of generalized numbers and denote it by $\tilde{\CC}$. Every $\maG_X$ is naturally a $\tilde{\CC}$ module, and hence  is often referred to as the $\tilde{\CC}$ module associated with $X$.  The sharp topology on $\tilde{\CC}$ is the topology generated by sets of the form $U_{x,p}$ where $x\in\tilde{\CC}$ and $p$ is an integer and
\[U_{x,p}:=\{\ip{x_{\epsilon}-\epsilon^p,x_{\epsilon}+\epsilon^p}| \ip{x_{\epsilon}}=x\}.\]
Any continuous seminorm $\rho$ on a locally convex spaces $X$  by definition provides a  map $\tilde{\rho}:E(X)\rightarrow E(\CC)$ by applying $\rho$ to each component. In fact $\tilde{\rho}$ descends to a map from $\maG_X\rightarrow \tilde{\CC}$.
The sharp topology on any $\tilde{\CC}$ module $\maG_X$ shall be defined to be the weakest topology that makes each $\tilde{\rho}$ above continuous.

We recall the functoriality of the above construction \cite{Scarpa},
\begin{lem}\label{lcs}
If $\phi:X\rightarrow Y$ is a continuous linear map between locally
convex spaces $X $ and $Y$ then there is a natural induced map $\phi_*:\maG_X\rightarrow
\maG_Y$ defined on the representatives as
$\phi_*(\ip{x_{\epsilon}})=\ip{\phi(x_\epsilon)}$. Further $\phi_*$ is continuous with respect to sharp topology.
\end{lem}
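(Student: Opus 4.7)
The plan is to check three things in order: that $\phi_*$ sends moderate nets to moderate nets, that it sends negligible nets to negligible nets (so it descends to the quotient), and that the resulting quotient map is sharp-continuous. The single underlying fact driving all three steps is that a continuous linear map $\phi:X\to Y$ \emph{pulls back} continuous seminorms: for every continuous seminorm $\sigma$ on $Y$, the composition $\sigma\circ\phi$ is a continuous seminorm on $X$.

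First, I would verify well-definedness on representatives. Given $\ip{x_\epsilon}\in\maG_X$ and a continuous seminorm $\sigma$ on $Y$, set $\rho:=\sigma\circ\phi$, which is a continuous seminorm on $X$. Moderateness of $x_\epsilon$ supplies an $N$ with $\rho(x_\epsilon)=O(\epsilon^N)$, so $\sigma(\phi(x_\epsilon))=O(\epsilon^N)$ and $\phi(x_\epsilon)\in E(Y)$. Smoothness of $\epsilon\mapsto\phi(x_\epsilon)$ is automatic since the composition of a smooth curve in $X$ with a continuous linear map is smooth into $Y$.

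Second, I would check that negligible nets go to negligible nets by exactly the same pull-back argument: if $\rho(x_\epsilon)=O(\epsilon^N)$ for every $N$, then $\sigma(\phi(x_\epsilon))=O(\epsilon^N)$ for every $N$ and every continuous seminorm $\sigma$ on $Y$. Hence $\phi$ carries $N(X)$ into $N(Y)$, and the induced map $\phi_*:\maG_X\to\maG_Y$, $\ip{x_\epsilon}\mapsto\ip{\phi(x_\epsilon)}$, is well defined. Linearity is inherited from $\phi$ component-wise.

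For sharp continuity, the goal is to show that for each continuous seminorm $\sigma$ on $Y$, the composition $\tilde\sigma\circ\phi_*:\maG_X\to\tilde{\CC}$ is continuous, since the sharp topology on $\maG_Y$ is by definition the weakest topology making every such $\tilde\sigma$ continuous. The computation is immediate: on a representative,
\[
\tilde\sigma(\phi_*\ip{x_\epsilon})=\tilde\sigma\ip{\phi(x_\epsilon)}=\ip{\sigma(\phi(x_\epsilon))}=\ip{(\sigma\circ\phi)(x_\epsilon)}=\widetilde{\sigma\circ\phi}(\ip{x_\epsilon}),
\]
so $\tilde\sigma\circ\phi_*=\widetilde{\sigma\circ\phi}$ as maps $\maG_X\to\tilde{\CC}$. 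Since $\sigma\circ\phi$ is a continuous seminorm on $X$, $\widetilde{\sigma\circ\phi}$ is continuous for the sharp topology on $\maG_X$ by its very construction, which gives the desired continuity of $\phi_*$. I do not expect any genuine obstacle: the whole lemma is a routine functoriality statement, and the only thing one has to be careful about is choosing the correct pulled-back seminorm $\sigma\circ\phi$ at each step rather than trying to compare seminorms on $X$ and $Y$ directly.
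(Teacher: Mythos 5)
Your proof is correct and follows essentially the same route as the paper: well-definedness via the pulled-back seminorm $\sigma\circ\phi$ applied to both the moderate and negligible estimates, and sharp continuity via the identity $\tilde\sigma\circ\phi_*=\widetilde{\sigma\circ\phi}$, which is exactly the paper's observation that preimages of basic open sets are basic open sets for $\sigma\circ\phi$. Your write-up is somewhat more explicit about the smoothness of the composed net and about linearity, but there is no substantive difference in approach.
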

\begin{proof}
A continuous linear map $\phi$ maps smooth net $x_{\ep}$ to smooth net $\phi_*(x_{\ep})$.
If $\tau$ is a continuous seminorm on $Y$ then $\tau\circ \phi$ is a continuous seminorm on $X$. Thus if $x_{\epsilon}$ satisfies an estimate with $\tau\circ \phi$ in $E(X)$ or $N(X)$ then $\phi(x_{\epsilon})$ satisfies the exact same estimates with respect to $\tau$ in $E(Y)$ or $N(Y)$. Thus $\phi_*$ is well-defined.
 Since basic open set $U$  in $\maG_Y$ are pull-back of open sets in $\tilde{\CC}$ by some seminorm $\rho$, then $\phi_*^{-1}(U)$ is a pullback of an open set  with respect to $\rho\circ \phi$.

\end{proof}
For example any smooth map between two manifold $f:M\rightarrow N$ gives  rise
to a pull back map $f^*:\maG^s(N)\rightarrow \maG^s(M)$. As  a consequence we can define a presheaf of algebras on $M$ by  assigning to any open set $U\subseteq M\rightarrow \maG^s(U)$. The restriction maps are given by the pull back under inclusions, that is $i:U\hookrightarrow V$ is an inclusion of open sets then $i^*:\maG^s(V)\rightarrow \maG^s(U)$ is the restriction map. This presheaf is  in fact a fine sheaf. Thus in particular we can define the support of a global section $x\in\maG^s(M)$  as usual to be the complement of the  biggest open subset of $M$  on which   $x$ restricts to $0$. 

 For any locally convex space $X$ we can also define a subalgebra $\maG^{\infty}_X$ of regular elements of $\maG_X$. These are all elements in $\maG_X$   such that there exists an integer $N$ so that  \eqref{asympt} holds  independent of the   seminorm $\rho$ chosen. Again we shall denote  by $\maG^{\infty}(M)$ the algebra $\maG^{\infty}_{\smooth(M)}$.  The algebra $\maG^{\infty}(M)$ provides  regularity features for analysis of generalized functions  and operation in $\maG^s(M)$ in a  way that $\smooth(M)$ provides  these features in $\maD'(M)$.For instance:
\begin{enumerate}[(a)]
\item  \it{Singular support}: For $\phi\in\maG^s(M)$ the singular support is defined as the  complement of largest open  set $\maU$ on which  the restriction  $\phi_{\maU}$ is in $\maG^{\infty}(M)$.
\item \it{ Wavefront set}: Let $P$ be an order  $0$ classical pseudodifferential operator  and let $\operatorname{char}(P)\subseteq T^*M$ be the characteristic set of $P$, that is  the $0$ set of its principal symbol. Then we can define the generalized wavefront set of a generalized function $\phi$ as:
\[WF_g(\phi):=\cap_{P\phi\in\maG^{\infty}(M)} \operatorname{char}(P)\quad P\in\Psi^0_{cl}(M).\]
\item \it{Hypoellipticity}: We can define an operator $P$ to be $\maG^{\infty}(M)$ hypoelliptic if
\[Pu\in\maG^{\infty}(M)\Longrightarrow u\in\maG^{\infty}(M).\]
\end{enumerate}
 Under appropriate circumstances  the above notions  are extensions of the  same in the distributional sense. Lemma \ref{local}  provides a general example.

\section{Moderate approximate units}\label{regu}

Let $M$ be a closed manifold. Let $\Omega$ be the bundle of $1$-densities on $M$.  By distributions on $M$ we mean the duel space $\maD'(M)=\smooth(M:\Omega)'$. A continuous linear operator $\maD'(M)\rightarrow \smooth(M)$  is called a smoothing operator. The space of all smoothing operators  shall be denoted  by $\Psi^{\!-\!\infty}(M)$ and its forms an ideal in the algebra of pseudodifferential operators $\Psi^{\infty}(M)$.  By identifying an operator to its kernel, smoothing operators can be viewed as a Frech\'et space of smooth sections of a vector bundle namely,
\[\smoothing(M)=\gp{M\times M:\pi_2^*\Omega},\]
 where $\pi_2:M\times M\rightarrow M$ is the projection on the second component.

Now we introduce certain nets of smoothing operators  that shall play the role of delta nets generated from a mollifier. These are the regularizing processes we are interested in.

\begin{definition}\label{mau}
A net of smoothing operators $T_{\ep}$ is called a moderate approximate unit if:
\begin{enumerate}[(a)]
\item  Its a moderate net that is $T_{\ep}\in E_{\smoothing(M)}$. That is $T_{\ep}$ satisfies  \eqref{asympt} with respect to any seminorm on $\smoothing(M)$.
\item  For any $u\in \maD'(M)$ 
\[\underset{\ep\rightarrow 0}{lim}\, T_{\ep}u=u.\]
\item For any smooth function $f\in\smooth(M)$ the approximation $T_{\ep}f\rightarrow f$ converges rapidly in the sense that given a  seminorm $\rho$ on smooth functions
\[\rho(T_{\ep}f-f)\sim O(\ep^N) \, \textrm{ for all}\, N\in\ZZ.\]
\end{enumerate}
\end{definition}
 The set of all Moderate approximate units shall be denoted by $\maU(M)$ or sometimes with $\maU$ for simplicity. The set $\maU$ is  is closed under the obvious action of the diffeomorphism group.
\begin{lem}
Let $\chi$ be a  diffeomorphism. Let $T_{\ep}\in\maU$ be a moderate  approximate unit then the $\ep$-wise push forword $\chi_*T_{\ep}$ is also a moderate approximate unit. 
\end{lem}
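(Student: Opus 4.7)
The plan is to check each of the three defining properties (a)--(c) of Definition~\ref{mau} separately, in each case reducing to a continuity or functoriality statement for the diffeomorphism action. The action is the conjugation $\chi_* T := \chi_* \circ T \circ \chi^*$, where $\chi^*: \maD'(M)\to\maD'(M)$ is the pullback of distributions (defined dually to $\chi^*$ on density sections) and $\chi_*:\smooth(M)\to\smooth(M)$ is the pushforward of smooth functions; equivalently, on Schwartz kernels, $\chi_* T$ has kernel $(\chi\times\chi)_* k_T$ in $\gp{M\times M:\pi_2^*\Omega}$.

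For property (a), I would observe that $T\mapsto \chi_* T$ is a continuous linear map on $\smoothing(M) = \gp{M\times M:\pi_2^*\Omega}$, since pushforward under the diffeomorphism $\chi\times\chi$ is a topological isomorphism of this Fr\'echet space. Then Lemma~\ref{lcs} applied to this continuous linear map gives immediately that $\chi_* T_{\ep}$ is a moderate net in $\smoothing(M)$.

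For properties (b) and (c), the key identity is
\[ (\chi_* T_{\ep})(u) \;=\; \chi_*\bigl( T_{\ep}(\chi^* u)\bigr), \qquad u\in\maD'(M), \]
which expresses everything in terms of $T_{\ep}$ applied to the pulled-back argument. For (b), since $\chi^* u\in\maD'(M)$, property (b) for $T_{\ep}$ yields $T_{\ep}(\chi^* u)\to \chi^* u$ in $\maD'(M)$, and then applying the continuous map $\chi_*$ on $\maD'(M)$ produces $\chi_*(T_{\ep}(\chi^* u))\to \chi_*\chi^* u = u$. For (c), I would write
\[ (\chi_* T_{\ep})(f) - f \;=\; \chi_*\bigl(T_{\ep}(\chi^* f) - \chi^* f\bigr), \]
using $\chi_*\chi^* = \mathrm{id}$. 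For any continuous seminorm $\rho$ on $\smooth(M)$ the composition $\rho\circ\chi_*$ is again a continuous seminorm on $\smooth(M)$, so property (c) of $T_{\ep}$ applied to $\chi^* f\in\smooth(M)$ yields $\rho((\chi_* T_{\ep})(f)-f)\sim O(\ep^N)$ for every $N$, as required.

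There is no real obstacle: the whole argument is a routine functoriality check, and the only thing to be slightly careful about is the convention that fixes how a diffeomorphism acts on operators (via simultaneous pushforward on the output and pullback on the input, corresponding to the natural $(\chi\times\chi)_*$-action on Schwartz kernels). Once that is fixed, continuity of $\chi^*$ and $\chi_*$ on the three Fr\'echet spaces $\maD'(M)$, $\smooth(M)$, and $\smoothing(M)$ suffices.
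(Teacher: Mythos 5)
Your proposal is correct and follows essentially the same route as the paper's own proof: continuity of the conjugation action on $\smoothing(M)$ together with Lemma~\ref{lcs} for moderateness, and the identity $\chi_*T_{\ep}(f)-f=\chi_*\bigl(T_{\ep}(\chi^*f)-\chi^*f\bigr)$ combined with continuity of the pullback/pushforward on $\maD'(M)$ and $\smooth(M)$ for properties (b) and (c). The only difference is the (immaterial) convention of whether the operator is conjugated by $\chi$ or $\chi^{-1}$.
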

\begin{proof}
Since the push foreword map $\chi_*:\smoothing(M)\rightarrow \smoothing(M)$ is defined as 
\[\chi_*(T)(f):=\chi^*(T(\chi^{*^{-1}}f)),\] where $\chi^*$ is a pull back of functions. It is  clear that  $\chi_*$ is  continuous linear map, by Lemma \ref{lcs} it maps  moderate nets to moderate nets. In  particular $\chi_*(T_{\ep})$ is moderate net in $E_{\smoothing(M)}$.

For the same reason $\chi_*$  maps all  negligible nets in $\smooth(M)$  to negligible nets.  First  we observe that
\[\chi_*T(f)-f=\chi^*(T\chi^{{-1}^*}f)-f
=\chi^*(T\chi^{{-1}^*}f-\chi^{{-1}^*}f)\]
Therefore
\[T_{\ep}\chi^{{-1}^*}f-\chi^{{-1}^*}f\in N_{\smooth(M)}\Rightarrow\chi^*(T_{\ep}\chi^{{-1}^*}f-\chi^{{-1}^*}f)\in  N_{\smooth(M)}.\]
By continuity of $\chi$
\begin{align*}
\underset{\ep\rightarrow 0}{lim}\, \chi_*T_{\ep}u&=\underset{\ep\rightarrow 0}{lim}\, \chi^*(T_{\ep}\chi^{{-1}^*}u)\\
&=\chi^*(\underset{\ep\rightarrow 0}{lim}\, T_{\ep}\chi^{{-1}^*}u)=u
\end{align*}
Thus  $\chi_*T_{\ep}$ satisfies Definition \ref{mau}.
\end{proof}
The following proposition follows directly form the definition of moderate approximate units and underlines one of the reasons for the definition.
\begin{prop}
A moderate approximate unit $T_{\ep}$ provides an embedding of  the distributions $\maD'(M)$ into the space of  smooth  special algebra $\maG^s(M)$  by $u\rightarrow  T_{\ep}u$. This maps restrict to an algebra   homomorphism on $\smooth(M)$. 
\end{prop}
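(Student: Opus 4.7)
The plan is to verify three things in order: well-definedness of the map $u \mapsto \langle T_{\ep} u\rangle$ into $\maG^s(M)$, injectivity, and the algebra-homomorphism property on $\smooth(M)$.

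For well-definedness, I would fix $u \in \maD'(M)$ and observe that evaluation at $u$ is a continuous linear map $\smoothing(M) \to \smooth(M)$, $T \mapsto T u$ (since a smoothing operator is by definition continuous from $\maD'(M)$ to $\smooth(M)$, and its dependence on its kernel in $\smoothing(M)$ is continuous). By Lemma \ref{lcs} applied to this map, the hypothesis (a) that $T_\ep$ is a moderate net in $\smoothing(M)$ transports to moderateness of $T_\ep u$ in $\smooth(M)$. Hence $\langle T_\ep u\rangle \in \maG^s(M)$ is well-defined, and linearity is immediate from linearity of each $T_\ep$.

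For injectivity, suppose $u \in \maD'(M)$ maps to $0$ in $\maG^s(M)$, i.e.\ the net $T_\ep u$ is negligible. Then in particular $T_\ep u \to 0$ in $\smooth(M)$, hence in $\maD'(M)$ as well. Condition (b) of Definition \ref{mau}, on the other hand, gives $T_\ep u \to u$ in $\maD'(M)$. Uniqueness of limits forces $u = 0$, giving injectivity of the embedding.

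For the algebra property restricted to $\smooth(M)$, take $f, g \in \smooth(M)$ and use the telescoping identity
\[
T_\ep(fg) - (T_\ep f)(T_\ep g) \;=\; \bigl(T_\ep(fg) - fg\bigr) \,+\, \bigl(f - T_\ep f\bigr)\, g \,+\, (T_\ep f)\bigl(g - T_\ep g\bigr).
\]
By condition (c), each of the differences $T_\ep(fg)-fg$, $f-T_\ep f$, $g-T_\ep g$ lies in $N_{\smooth(M)}$. The first summand is therefore already negligible. For the remaining two, $g$ is a fixed smooth function (hence moderate trivially) and $T_\ep f$ is moderate by the argument of the first step. The main (and only slightly nontrivial) point is then that $N_{\smooth(M)}$ is an ideal inside $E_{\smooth(M)}$ with respect to pointwise multiplication, which follows from the Leibniz rule applied to the standard $\mathcal{C}^k$-seminorms defining the Fr\'echet topology of $\smooth(M)$: moderate $\times$ negligible is negligible. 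Hence the right-hand side lies in $N_{\smooth(M)}$, yielding $\langle T_\ep(fg)\rangle = \langle T_\ep f\rangle \langle T_\ep g\rangle$ in $\maG^s(M)$, which is the algebra-homomorphism property on $\smooth(M)$. The principal conceptual obstacle is this last ideal property of $N_{\smooth(M)}$, but it is standard in the Colombeau framework and amounts to combining the Leibniz rule with the fact that moderate nets have seminorms bounded by polynomial powers of $\ep^{-1}$ while negligible nets decay faster than any such power.
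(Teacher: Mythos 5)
Your proof is correct, and it is exactly the direct verification the paper has in mind: the paper gives no proof here (it states the proposition ``follows directly from the definition''), and your three steps match the arguments it does spell out for the analogous embedding lemma in Section \ref{glob_alg} --- continuity of $T\mapsto Tu$ plus Lemma \ref{lcs} for moderateness, and $T_\ep f-f\in N_{\smooth(M)}$ combined with the standard ideal property of negligible nets for the homomorphism claim. Your telescoping identity and the injectivity argument via condition (b) and uniqueness of distributional limits are the right way to make the omitted details explicit.
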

\begin{definition}\label{cra}
 Fix a  Riemannian metric  on $M\times M$. We call a net  $T_{\ep}\in\smoothing(M)$ a local moderate approximate unit or a rapidly converging approximation if 
\begin{enumerate}
\item It is a moderate approximate unit satisfying the Definition \ref{mau}.
\item The following transfer of regularity holds:
\begin{eqnarray}\label{cap}
T_{\ep}(\maD'(M))\cap \maG^{\infty}(M)=\smooth(M).
\end{eqnarray}
\item There is no propagation of support  that is, for any $\delta>0$  there exists a decomposition of  the form \[T_{\ep}=L_{\ep}+N_{\ep},\]
such that $N_{\ep}$ is a negligible net of operators, and $L_{\ep}$ is  supported in a $\delta$  neighbourhood of the diagonal in $M\times M$.  \end{enumerate}
Thus for a local moderate approximate unit $\Op{supp}(T_{\ep}(u))=\Op{supp}(u)$.
\end{definition}
The local moderate units are also preserved under diffeomorphisms.
We shall refer to the condition \eqref{cap} on a moderate approximate unit as tameness condition. 
\begin{prop}\label{local}
If $T_{\ep}$ is local then  the  map $u\rightarrow T_{\ep}u$ is a sheaf-morphism on the sheaf $\mathscr{D}(M)\rightarrow \maG^s(M)$. In particular  local units preserve supports and  singular supports of distributions.
\end{prop}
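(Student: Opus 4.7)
The plan is to establish the sheaf-morphism property first and then deduce the support and singular-support statements as consequences. The map on global sections is $u\mapsto\ip{T_\ep u}$; to get an induced map $\mathscr{D}(U)\to \maG^s(U)$ for each open $U$, I will extend a distribution $v\in\mathscr{D}(U)$ to some $\tilde v\in\mathscr{D}(M)$ (using partitions of unity in the standard way) and define its image as $(T_\ep\tilde v)|_U$. For well-definedness, it suffices to prove the key lemma: \emph{if $u\in\mathscr{D}(M)$ vanishes on $U$, then $\ip{T_\ep u|_U}=0$ in $\maG^s(U)$}.

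For this I would use condition (3) of Definition \ref{cra}. Fix $x\in U$ and choose $\delta>0$ so small that $\overline{B_{2\delta}(x)}\subset U$. Apply condition (3) to write $T_\ep = L_\ep + N_\ep$ with $L_\ep$ supported in a $\delta$-neighbourhood of the diagonal and $N_\ep$ negligible. For any $y\in B_\delta(x)$, the value $L_\ep u(y)$ depends only on $u|_{B_{2\delta}(x)}=0$, so $L_\ep u\equiv 0$ on $B_\delta(x)$. Simultaneously $\ip{N_\ep u}$ is negligible in $\maG^s(M)$ and hence in $\maG^s(B_\delta(x))$ by Lemma \ref{lcs} applied to restriction. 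Covering $U$ by such balls and invoking the sheaf property of $\maG^s$ recalled in Section \ref{pre} gives $\ip{T_\ep u|_U}=0$. Compatibility with restrictions is then automatic by construction.

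For support preservation, the inclusion $\Op{supp}(T_\ep u)\subseteq\Op{supp}(u)$ is immediate from the sheaf morphism. For the reverse, suppose $\ip{T_\ep u}=0$ in $\maG^s(U)$: then $T_\ep u|_U\to 0$ in $\smooth(U)$, hence in $\maD'(U)$; but condition (b) of Definition \ref{mau} gives $T_\ep u\to u$ in $\maD'(M)$, and restriction $\maD'(M)\to\maD'(U)$ is continuous, so $u|_U=0$. For the forward direction of singular-support preservation, if $u$ is smooth on $V\subset U$ and $x\in V$, pick $\phi\in C_c^\infty(V)$ with $\phi\equiv 1$ on a neighbourhood $V'\ni x$. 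Then $\phi u\in\smooth(M)$, so by property (c) of Definition \ref{mau}, $\ip{T_\ep(\phi u)}-\phi u$ is negligible and $\ip{T_\ep(\phi u)}\in\maG^\infty(M)$; and by the same no-propagation decomposition, $\ip{T_\ep((1-\phi)u)}$ is zero near $x$ modulo negligible once $\delta<\Op{dist}(V',\Op{supp}(1-\phi))$. So $\ip{T_\ep u}\in\maG^\infty$ near $x$.

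The main obstacle is the reverse inclusion $\Op{singsupp}(u)\subseteq\Op{singsupp}(T_\ep u)$, because the tameness condition \eqref{cap} is a global statement on $M$ whereas regularity is local. The approach is to convert it to a local statement by a nested-cutoff argument: given $x\in U$ with $\ip{T_\ep u}\in\maG^\infty$ on a neighbourhood, choose $\psi,\phi\in C_c^\infty(U)$ with $\phi\equiv 1$ on an open set containing $\Op{supp}(\psi)$ and $\psi\equiv 1$ near $x$. On the "interior" where $B_\delta(y)\subset\{\phi=1\}$, the identity $T_\ep(\phi u)=T_\ep u-T_\ep((1-\phi)u)$ together with the no-propagation decomposition gives $\ip{T_\ep(\phi u)}\in\maG^\infty$; outside $\Op{supp}(\phi)+B_\delta$, it is negligible; for the transition annulus one iterates with a finer cutoff hierarchy and shrinks $\delta$ below the successive buffer distances. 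Once $\ip{T_\ep(\phi u)}\in\maG^\infty(M)$ globally, condition \eqref{cap} forces $\phi u\in\smooth(M)$, so $u$ is smooth in the neighbourhood of $x$ where $\psi\equiv 1$.
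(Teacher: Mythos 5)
Your construction of the sheaf morphism is essentially the paper's: both arguments hinge on condition (3) of Definition \ref{cra} (no propagation of support) together with the sheaf property of $\maG^s$; the paper organises this as a gluing of the locally defined maps $w\mapsto T_{\ep}(\phi_\lambda w)|_{U_\lambda}$ over a cover with compatible cutoffs, whereas you prove the equivalent vanishing lemma (if $u|_U=0$ then $\ip{T_\ep u}|_U=0$) and define the local map by extension and restriction. These are interchangeable, and your treatment of supports and of the easy direction of singular supports (cutoff plus condition (c) of Definition \ref{mau} plus no propagation) is correct and in fact more explicit than the paper, which disposes of the singular-support claim in one sentence by invoking \eqref{cap}.

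The gap is in the step you yourself flag as the main obstacle. To conclude from $\ip{T_\ep u}\in\maG^{\infty}$ near $x$ that $\ip{T_\ep(\phi u)}\in\maG^{\infty}(M)$ globally, you must control $T_\ep(\phi u)$ on the transition annulus where $0<\phi<1$ (thickened by $\delta$). There neither of your two mechanisms applies: no propagation only makes $T_\ep(\phi u)$ negligible \emph{outside} a $\delta$-neighbourhood of $\Op{supp}\phi$, and the identity $T_\ep(\phi u)=T_\ep u-T_\ep((1-\phi)u)$ only helps where $(1-\phi)u$ is supported at distance $>\delta$, i.e.\ deep inside $\{\phi=1\}$. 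On the annulus $T_\ep(\phi u)$ is merely moderate, and $\maG^{\infty}$ demands a seminorm-independent order there. Iterating with a finer hierarchy of cutoffs does not repair this: every cutoff in the hierarchy has its own annulus, $T_\ep$ does not commute with multiplication by cutoffs, and $u$ may be singular on a full neighbourhood of $x$, hence on every one of these annuli, so no choice of buffer distances ever produces a cutoff whose annulus is controlled. What is actually needed is a \emph{local} version of the tameness condition \eqref{cap} (or a quantitative pseudolocality statement for $T_\ep$, as holds for $F_\ep(\Delta)$ via \cite{dave}); the paper's own proof silently assumes this localisation when it asserts that \eqref{cap} yields ``$T_{\ep}(u)|_U\in\maG^{\infty}(U)$ precisely if $u|_U\in\smooth(U)$'', so you have correctly isolated the missing ingredient, but your proposal does not supply it.
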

 \begin{proof}
This is accomplished as usual  by covering  by precompact  open sets  and cut offs. Here are  the details for completeness.

Let $U\subseteq M$  be an open subset. 
Now cover $U$  by an open cover $U_{\lambda},\lambda\in\Lambda$ such that the closure $\overline{U_{\lambda}}$  is compact in $U$. and let $\phi_{\lambda}\in\smooth_c(U)$ be  such that $\phi_{\lambda}\equiv 1$ on some neighbourhood of $U_{\lambda}$. Define  the map $T_U:\maD'(U)\rightarrow\maG^s(U_{\lambda})$ by $T_{\lambda}(u):=T_{\ep}(\phi_{\lambda}u)_{|_{U_{\lambda}}}$. 
 Then we check that:
\[T_{\lambda}(w)_{|_{U_{\lambda}\bigcap U_{\mu}}}=T_{\mu}(w)_{|_{U_{\lambda}\bigcap U_{\mu}}}.\]
 This follows immediately as $(\phi_{\lambda}-\phi_{\mu})w$ is supported away from $U_{\lambda}\bigcap U_{\mu}$ implies that  $T_{\ep}((\phi_{\lambda}-\phi_{\mu})w)$ is also  supported  away from $U_{\lambda}\bigcap U_{\mu}$ by Definition \ref{cra}. Thus there exists a $T_U(w)\in\maG^s(U)$   such that  $T_U(w)_{|_{U_{\lambda}}}=T_{\lambda}(w)$. The  map $w\rightarrow T_U(w)$  by a similar argument is  independent of the covering $U_{\lambda}$ and the cut off functions $\phi_{\lambda}$ and provide the required  sheaf morphism.

For a distribution $u\in\maD'(M)$  and an  open set $U\subset M$  it follows  then by  \eqref{cap} that $T_{\ep}(u)_{|_U}$ is in $\maG^{\infty}(U)$ precisely if $u_{|_U}\in\smooth(U)$. Thus $T_{\ep}$ preserves the singular support.
 \end{proof}
We shall denote  the set of local moderate  approximate units  on $M$ by $\maU_{\Op{loc}}(M)$  or simply $\maU_{\Op{loc}}$.
 There is of course a plentiful supply of moderate approximate units. All the examples given below are of local moderate approximate units.
\begin{eg}\label{funct_calc}
 Let $\Delta$ be  the Laplace operator associated to  a Riemannian manifold $M$. Let $F\in\Sch(\RR)$ be a Schwartz function on the reals such that $F$ is identically 1 near origin. Let $F_{\ep}(x):=F(\ep x)$. Then by applying standard functional calculus $F_{\ep}(\Delta)$ is a moderate approximate unit. All the asymptotic properties follow from Weyl's estimate on  eigenvalues of $\Delta$. In addition $F_{\ep}(\Delta)$  is invariant under isometries .(See \cite{dave} for details.)
\end{eg}
As  a special case of the above example  consider a  compact Lie group $G$ and let $\hat{G}$ denote the set of all irreducible  representations  of $G$. Let $\pi\in\hat{G}$ be an irreducible representation  and let
\[\chi_{\pi}(g)=\Op{tr}[\pi(g)],\] 
be the character of $\pi$,  and is well defined as $\pi$ is necessarily  finite dimensional with dimension  denoted by $d_{\pi}$. As a consequence  of Peter Weyl theorem one  can  obtain (see \cite{Wong}),
\[f=\sum_{\pi\in\hat{G}}d_{\pi}\chi_{\pi}\underset{G}{*}f \qquad f\in L^2(G).\]
 Let $\Delta_G$ denote the Laplace operator on $G$. obtained from a  basis of the Lie algebra $\mathcal{G}$ then $\chi_{\pi}$ are eigenfunctions of $\Delta_G$ with eigenvalues $\lambda_{\pi}$. Hence if $F$  is a Schwartz function as in the above example then
\begin{align*}
F_{\ep}(\Delta_G)f&=\sum_{\pi\in\hat{G}}d_{\pi}F_{\ep}(\Delta_G)(\chi_{\pi}\underset{G}{*}f)\\
&=\sum_{\pi\in\hat{G}}d_{\pi}F_{\ep}(\Delta_G)(\chi_{\pi})\underset{G}{*}f,\\
&=\sum_{\pi\in\hat{G}}d_{\pi}F({\ep}\lambda_{\pi})(\chi_{\pi})\underset{G}{*}f
\end{align*}
here we have used  the left invariance of $F_{\ep}(\Delta_G)$.  

We now move to some noncompact examples of Lie group where convolution shall play  an important part.
\begin{eg}
Due to noncompactness of $\RR^n$ one has to modify the  above notions slightly and work with compactly supported distributions.
%%%%%%%%%%%%%%%%
\begin{definition}
A  rapidly converging approximate unit on $\RR^n$  is a net of operators $T_{\ep}$ such that their kernels  $\operatorname{ker_T}_{\ep}\in\Sch(\RR^n\times \RR^n)$.
and the following holds.
\begin{enumerate}[(a)]
\item For  any $u\in \compsupp(\RR^n)$ the net
$T_{\ep}(u)$ is a moderate net in $\smooth(\RR^n)$. and 
\[\underset{\ep\rightarrow 0}{lim}\,T_{\ep}(u)=u.\]
That is $T_{\ep}:\compsupp(\RR^n)\rightarrow\maG^s(\RR^n)$ is a well defined injective map.
\item For a compactly supported function $f\in\smooth_c(M)$  the regularization converges rapidly  that is $T_{\ep}(f)-f$ is a negligible net in special colombeau algebra $\maG^s(\RR^n)$.
\item The regularization preserves supports. that is for $u\in\compsupp(\RR^n)$ $\operatorname{supp}u=\operatorname{supp_g}T_{\ep}(u)$
\end{enumerate}
\end{definition}
   As in the original construction of Colombeau \cite{c1} a moderate approximate unit can be constructed from a mollifier $\rho\in\Sch(\RR^n)$ satisfying the following  conditions:
\begin{eqnarray}  \label{nomoment}
\int_{\RR^n}\rho(x)dx=1\quad\int_{\RR^n}x^{\alpha}\rho(x)dx=0\hskip 0.2in \alpha\in\NN_+^n.
\end{eqnarray}
Then the net of functions  $\rho_{\ep}(x):=\frac{1}{\ep^n}\rho(\frac{x}{\ep})$ is a delta net and 
  convolution with such a delta net provides an example of moderate approximate unit.  An important characteristic of these approximate units is their equivariance with respect to the Euclidian translations.
\end{eg}
Note that  on $\RR$  a mollifier satisfying \eqref{nomoment} can be obtained as the  Fourier transform of the  function $F$  used in  Example \ref{funct_calc}.
\begin{eg}
 The previous example  can be modified  in many ways.  For instance let $\rho$ be a Schwartz function on $\RR^2$ satisfying \eqref{nomoment}. Let $\tilde{\rho}_{\ep}(x,y):=\frac{1}{\ep^3}\rho(\ep^{\!-\!1} x,\ep^{\!-\!2}y)$. Then convolution with this new delta net  continues  to provide an moderate approximate unit.
\end{eg}

On $\RR^{2n+1}$ convolution in Heisenberg group with a delta net provides moderate approximate units. We provide an elementary construction in all detail  in the following subsection.

\subsection{The Heisenberg group}\label{heis}
Let $H_n:=\RR^{2n}\times \RR$ be the Heisenberg group with the usual composition:
\[(x,\xi,t)\circ(y,\eta,s)=\left(x+y,\xi+\eta,t+s+\frac{1}{2}(x.\eta-y.\xi)\right).\]

 The usual volume form $dxd\xi dt$ is invariant under both left and right translations that is:
\[L^*_pdxd\xi dt=R^*_pdxd\xi dt=dxd\xi dt\quad p\in H_n.\]
The Lie algebra $\mathfrak{h}_n$ of the Heisenberg group  is generated by  left invariant vector fields:

\begin{align*}
X_i&=\frac{\partial}{\partial x_i}-\frac{\xi_i}{2}\frac{\partial}{\partial t}\\
\Xi_i&=\frac{\partial}{\partial \xi_i}+\frac{x_i}{2}\frac{\partial}{\partial t}\\
T&=\frac{\partial}{\partial t}.
\end{align*}
 Let $\Delta$ be the  associated Laplace operator,
\[\Delta:=T^2+\sum_i X_i^2+\Xi_i^2.\]

 Then the metric associated  to  it is a left invariant metric on $H_n$. For example in case of $H_1=\RR^3$ it can be given by :
\[\mathfrak{G}=
\begin{pmatrix}
1+\frac{y^2}{2}&-\frac{xy}{2}&\frac{y}{2}\\
-\frac{xy}{2}&1+\frac{x^2}{2}&-\frac{x}{2}\\
\frac{y}{2}&-\frac{x}{2}&1
\end{pmatrix}
\]
We shall need the  operator $\Delta$  apply standard elliptic regularity argument to our constructions.

Now to construct moderate approximate units on $H_n$ we  can procede like in the Euclidian case discussed before. L
We shall need the following:
\begin{prop}Let $\rho\in \Sch(\RR^{2n+1})$ be a mollifier that  satisfies Equation \eqref{nomoment}. Let  $\rho_{\ep}(x):=\frac{1}{\ep^n}\rho(\frac{x}{\ep})$. Then $T_{\ep}(u)=\rho_{\ep} \heis u$ is a rapidly converging approximation.
\end{prop}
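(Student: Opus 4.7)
The plan is to verify the three clauses of Definition \ref{cra} separately, each by adapting a familiar mechanism from Euclidean mollification to the Heisenberg convolution.

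For Definition \ref{mau}, I would first check moderateness: the Schwartz kernel of $T_\ep$ is (up to a change of variables from the group multiplication) $\rho_\ep(q^{-1}\circ p)$, and all smoothing-operator seminorms reduce to Schwartz seminorms of $\rho$ applied to the dilated variable, giving at worst polynomial growth in $\ep^{-1}$. Convergence $T_\ep u \to u$ in $\maD'$ is the standard delta-net argument transplanted to the group: $\rho_\ep$ concentrates at the identity, and Heisenberg convolution is continuous in both arguments on the appropriate spaces. The rapid convergence \emph{on} $\smooth$ is the step that uses the vanishing-moment hypothesis \eqref{nomoment}: writing $f(p \circ q)$ as a Taylor polynomial in $q$ in exponential coordinates of large order $N$ and integrating against $\rho_\ep(q)$ after the substitution $q \mapsto \ep q$, every monomial contribution is killed by \eqref{nomoment} and only the $O(\ep^N)$ remainder survives, uniformly on compact sets. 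A minor subtlety is that the group product $p\circ q$ is polynomial rather than linear in $q$, but since each factor of $q$ picks up a power of $\ep$, this only improves the estimate.

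The tameness condition \eqref{cap} is the main obstacle, and here I would rely on the elliptic operator $\Delta=T^2 + \sum_i(X_i^2 + \Xi_i^2)$. Since $\Delta$ is left-invariant and convolution is on the left, $\Delta^k(\rho_\ep \heis u) = \rho_\ep \heis (\Delta^k u)$. If $T_\ep u \in \maG^{\infty}$, all $\smooth$-seminorms of $T_\ep u$ are controlled by a single power $\ep^{-N}$, and passing each $\Delta^k$ inside shows that $\Delta^k u$ is regularized by $T_\ep$ into a uniformly moderate net. A uniform Sobolev-type bound on convolution by $\rho_\ep$ then gives that $\Delta^k u$ lies in a fixed negative Sobolev space for every $k$, and standard elliptic regularity for $\Delta$ upgrades $u$ to $\smooth$. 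The reverse inclusion $\smooth(H_n) \subseteq T_\ep(\maD')\cap \maG^{\infty}$ is immediate from clause (c) of Definition \ref{mau}.

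Finally, the no-propagation clause is obtained by a cutoff. For fixed $\delta > 0$, choose a smooth bump $\chi$ on $H_n$ equal to $1$ near the identity and supported in the $\delta$-ball. Split $\rho_\ep = \chi\rho_\ep + (1-\chi)\rho_\ep$, giving a corresponding decomposition $T_\ep = L_\ep + N_\ep$ of convolution operators. Outside the support of $\chi$, $\rho_\ep$ and all its derivatives decay faster than any power of $\ep$ uniformly (since $\rho$ is Schwartz and the rescaling by $\ep^{-1}$ pushes values far into the tail), so $N_\ep$ is a negligible net of smoothing operators, while $L_\ep$ has integral kernel supported within the $\delta$-tube about the diagonal. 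The expected main difficulty is the tameness step: reconciling a uniform $\ep$-growth bound on $T_\ep u$ with classical regularity of $u$ requires the Laplacian $\Delta$, its left-invariance, and uniform mapping properties of Heisenberg convolution to all work together cleanly, whereas the moderateness, convergence, and support clauses essentially follow from Schwartz estimates, moment cancellation, and the tails of $\rho$ respectively.
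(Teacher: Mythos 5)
Your treatment of rapid convergence on smooth functions (Taylor expansion against $\rho(q)\,dq$ after rescaling, with every monomial killed by the moment conditions \eqref{nomoment}, and the observation that the polynomial nonlinearity of the group product only improves the powers of $\ep$) and of the support clause (cutoff of $\rho_\ep$ near the identity, with the far part negligible by Schwartz decay) are exactly the paper's arguments. You diverge in two places. First, for moderateness on compactly supported distributions the paper does not go through kernel seminorms and continuity of convolution; it invokes elliptic regularity of $\Delta$ to write $u=\sum_j c_j\Delta^j f$ with $f$ continuous and compactly supported, pulls the $\Delta^j$ outside the convolution, and reduces the asymptotics to the continuous case, where the integral formula gives the estimate directly. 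Your kernel-based route is also valid and somewhat more self-contained. Second, you invest your main effort in the tameness condition \eqref{cap}, which the paper does not verify here at all: on the noncompact $H_n=\RR^{2n+1}$ the operative definition is the modified one given in the $\RR^n$ example (moderateness and convergence on $\compsupp$, rapid convergence on compactly supported smooth functions, support preservation), which omits \eqref{cap}. If you do want to establish tameness, beware a sidedness slip: with the conventions of Appendix A, \emph{left}-invariant vector fields differentiate the \emph{left} factor of a convolution, so $\Delta(\rho_{\ep}\heis u)=(\Delta\rho_{\ep})\heis u$ rather than $\rho_{\ep}\heis(\Delta u)$; to move powers of a Laplacian onto $u$ you need the right-invariant Laplacian (equally elliptic), and the passage from a uniform moderate bound on $\rho_{\ep}\heis(\Delta^k u)$ to a fixed negative Sobolev order for $u$ is the nontrivial quantitative step that the paper delegates to \cite{dave}.
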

\begin{enumerate}[(a)]
\item 
By elliptic regularity of $\Delta$ on any bounded domain  given any compactly supported distribution $u\in\compsupp(H_n)$ there exists  a compactly supported continuous function $f$ supported in any neighbourhood of $\operatorname{supp}u$ such that for some constants $c_j$`s
\[\sum_jc_j\Delta^jf=u.\]
 Thus for any smooth $\rho$
\begin{align*}
u\heis \rho&=\sum c_j\Delta^jf\heis \rho\\
&=\sum c_j\Delta^j(f\heis\rho).
\end{align*}
Or it suffices to check the asymptotics for the case when $u=f$ is continuous. Now the estimate can be directly be obtained from the integral formula:
\[\rho_{\ep}\underset{H_n}{*} f(p)=\int_{G}\rho_{\ep}(q)f(pq^{-1})dq.\]

\item  Let $f$ be a compactly supported smooth function. then:

Let $\Op{dist^H}_{\ep}(p,q)=p-(\ep q^{-1})p$ be the euclidean difference.
then 
\[\Op{dist^H}_{\ep}(p,q)=-\ep q+\frac{\ep}{2}\omega(\bar{p},\bar{q})\]

where $\bar{(x,\xi,t)}=(x,\xi)$  and $\omega$ is the standard symplectic form on $\RR^{2n}$.
Now remembering that $\rho$ satisfies \eqref{nomoment}
\begin{align*}
f\heis\rho_{\ep}(p)-f(p)&=\int f(q^{-1}p)\rho_{\ep}(q)dq-f(p)\\
&=\int f((\ep q^{-1})p)\rho(q)dq-f(p)\\
&=\int f(p-\Op{dist^H}_{\ep}(p,q))\rho(q)dq-f(p)\\&=\int \left( f(p-\Op{dist^H}_{\ep}(p,q))-f(p)\right)\rho(q)dq.
\end{align*}
Applying Taylor expansion  we get
\begin{align*}
f\heis\rho_{\ep}(p)-f(p)&=\int \left( \sum_{|\alpha|<N}\frac{\Op{dist^H}_{\ep}(p,q))^{\alpha}}{\alpha!}\partial^{\alpha}f(p)\right)\rho(q)dq+C\ep^N\\
&\sim O(\ep^N).
\end{align*}

\item One only needs to observe that given $\delta>0$ one can decompose $\rho_{\ep}$  into $\rho_{\ep}=w_{\ep}+v_{\ep}$ with $w_{\ep}$ supported in a $\delta$ neighbourhood of the origin and $v_{\ep}$ is negligible. Thus support of $u\heis \rho_{\ep}$ is contained in every $\delta$ neighbourhood of $\operatorname{supp}u$. 
\end{enumerate}
\begin{remark}
 Given a  smooth ``delta net'' the proof of the  above preposition holds  for any Lie group. But for a general Lie group there is no "easy" choice of a  smooth net of functions suitably approximating the delta distribution at identity similar to $\rho_{\ep}$.
\end{remark}
 We gather all the facts together to obtain,

\begin{prop}\label{group}
For a  schwart function $\rho$ satisfying \eqref{nomoment} the map
\[\rho:\compsupp(H_n)\rightarrow  \maG^s(H_n)\quad \rho(u):=\rho_{\ep}\heis u,\]
extends to a  sheaf morphism $\maD'(H_n)\rightarrow \maG(H_n)$  and   is an algebra  homomorphism on $\smooth(H_n)$.
\end{prop}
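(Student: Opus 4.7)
The plan is to adapt the argument of Proposition \ref{local} to the noncompact setting of $H_n$, using the three properties of $T_\ep u := \rho_\ep \heis u$ established in the preceding proposition: moderateness, rapid convergence on smooth functions, and the no-propagation decomposition $\rho_\ep = w_\ep + v_\ep$. The essential new point is that distributions on $H_n$ are not a priori compactly supported, so one must localize with cut-offs in order to apply the preceding proposition on $\compsupp(H_n)$.

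For the sheaf-morphism extension, fix an open $U \subseteq H_n$ and $u \in \maD'(U)$. Cover $U$ by a locally finite family of relatively compact open sets $U_\lambda$ and pick $\phi_\lambda \in \smooth_c(U)$ with $\phi_\lambda \equiv 1$ on a neighborhood of $\overline{U_\lambda}$. Then $\phi_\lambda u$ extends by zero to an element of $\compsupp(H_n)$, so $T_\lambda(u) := (\rho_\ep \heis (\phi_\lambda u))|_{U_\lambda}$ is well defined in $\maG^s(U_\lambda)$. On the overlap $U_\lambda \cap U_\mu$, the distribution $(\phi_\lambda - \phi_\mu)u$ vanishes on a neighborhood of $U_\lambda \cap U_\mu$ at positive distance $\delta_0$ from it; choosing $\rho_\ep = w_\ep + v_\ep$ with $w_\ep$ supported in a $\delta_0/2$-neighborhood of the origin makes $w_\ep \heis ((\phi_\lambda - \phi_\mu)u)$ vanish on $U_\lambda \cap U_\mu$, while $v_\ep \heis ((\phi_\lambda - \phi_\mu)u)$ is a negligible net. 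Hence $T_\lambda(u) \equiv T_\mu(u)$ modulo negligibles on the overlap, and gluing gives $T_U(u) \in \maG^s(U)$. Independence from the cover and cut-offs follows by an identical comparison between two such data, and compatibility with restriction to any $V \subseteq U$ is immediate, producing the required sheaf morphism.

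For the algebra homomorphism on $\smooth(H_n)$, given $f, g \in \smooth(H_n)$ one writes
\[
\rho_\ep \heis (fg) - (\rho_\ep \heis f)(\rho_\ep \heis g) = \bigl(\rho_\ep \heis (fg) - fg\bigr) - \bigl(\rho_\ep \heis f - f\bigr)(\rho_\ep \heis g) - f\bigl(\rho_\ep \heis g - g\bigr).
\]
Each factor of the form $\rho_\ep \heis h - h$ is negligible by property (b) of the preceding proposition, and since the $C^k$-seminorms on $\smooth(H_n)$ satisfy Leibniz-type submultiplicativity, products of moderate nets with negligible nets are again negligible. Therefore the displayed difference lies in $N_{\smooth(H_n)}$ and the induced map is an algebra homomorphism into $\maG^s(H_n)$.

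The principal obstacle is the sheaf-morphism construction---in particular the independence from the cover and cut-off data, which requires careful uniform use of the support-preservation decomposition on overlaps. The algebra homomorphism claim then reduces to routine estimates on products of moderate and negligible nets.
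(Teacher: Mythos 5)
Your proposal is correct and follows essentially the same route the paper intends: the paper gives no separate argument for this proposition (it simply ``gathers the facts together''), relying on the three properties of $\rho_\ep\heis u$ established just before it together with the cut-off/covering template of Proposition \ref{local}, and your write-up fills in exactly those steps, including the standard telescoping identity for the algebra homomorphism. No gaps.
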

\begin{eg}
For $x,\xi\in\RR^n$ let $T_x$ and $M_{\xi}$  be the operation of translation and modulation on $\RR^n$. That is 
\begin{align*}
T_x f(t)&:=f(t-x),\\
M_{\xi}f(t)&:=e^{2\pi i \xi\cdot t}f(t).
\end{align*}
We recall that the Schr\"odinger representation is the unitary representation of the Heisenberg group  on $L^2(\RR^n)$ given by
\[\pi(x,\xi,\tau):=e^{2 \pi i  \tau}e^{\pi i x\cdot  \xi} T_xM_{\xi}\,\in\,\bf{B}(L^2(\RR^n)).\]
The Fourier transform of  a Schwartz function $F\in\Sch(\RR^{2n+1})\subset L^1(H_n)$  is  a bounded operator on $L^2(M)$ given by:
\[\hat{\pi}(F):=\int_{H_n} F(\alpha)\pi(\alpha^{\!-\!1})d\alpha.\]
In fact one computes  the kernel to be
\[\operatorname{ker}\hat{\pi}(F)(x,y)=\mathcal{F}_2\mathcal{F}_3F(y-x,\frac{y+x}{2},1)\in\Sch(\RR^n\times \RR^n).\]
 and hence $\hat{\pi}(F)$ defines a smoothing operator on $L^2(\RR^n)$.

We wish to claim that the Fourier transform with respect to the Schr\"odinger representation  maps a  delta net $\rho_{\ep}$ to a rapidly converging approximate unit. 

 We shall need the following  result.
\begin{lem}\label{cheap}
Given $f\in\mathcal{C}_c^{\infty}(\RR^n)$ there exists there exists a $g\in\Sch(\RR^n)$ and $F\in\Sch(\RR^{2n+1})$ such that
\[\hat{\pi}(F)g=f.\]
\end{lem}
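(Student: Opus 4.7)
\emph{Strategy.} Rather than attempting to invert $\hat\pi(F)$ for an arbitrarily prescribed $g$, I would exploit the freedom to choose both $g$ and $F$ and reduce the problem to a rank--one kernel. Using the kernel formula stated just above the lemma,
\[(\hat\pi(F)g)(x)=\int_{\RR^n}\mathcal{F}_2\mathcal{F}_3 F\!\left(y-x,\tfrac{y+x}{2},1\right) g(y)\,dy,\]
the task splits into two independent subproblems: (i) produce $K\in\Sch(\RR^n\times\RR^n)$ and $g\in\Sch(\RR^n)$ with $\int K(x,y)g(y)\,dy=f(x)$, and (ii) realize any such $K$ as the kernel of some $\hat\pi(F)$ with $F\in\Sch(\RR^{2n+1})$.

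\emph{Step 1: rank--one kernel.} Take $g(y)=e^{-\pi|y|^2}$ and set $h(y)=c\,g(y)$ with $c^{-1}=\int_{\RR^n}g(y)^2\,dy$, so that $\int h\,g\,dy=1$. Define
\[K(x,y):=f(x)h(y).\]
Since $f\in\smooth_c(\RR^n)\subset\Sch(\RR^n)$ and $h\in\Sch(\RR^n)$, the tensor product $K$ is Schwartz on $\RR^{2n}$, and by construction $\int K(x,y)g(y)\,dy=f(x)$.

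\emph{Step 2: inverting the partial Fourier transform.} Apply the linear change of variables $(u,v)=(y-x,(y+x)/2)$ and set
\[G(u,v):=K\!\left(v-\tfrac{u}{2},\,v+\tfrac{u}{2}\right)=f\!\left(v-\tfrac{u}{2}\right)h\!\left(v+\tfrac{u}{2}\right)\in\Sch(\RR^{2n}).\]
Choose any $\psi\in\Sch(\RR)$ with $\hat\psi(1)=1$, for instance $\psi(\tau)=e^{2\pi i\tau}e^{-\pi\tau^2}$, whose Fourier transform is $e^{-\pi(\xi-1)^2}$. Then define
\[F(u,v,\tau):=(\mathcal{F}_2^{-1}G)(u,v)\,\psi(\tau)\in\Sch(\RR^{2n+1}).\]
A direct computation yields $\mathcal{F}_2\mathcal{F}_3 F(u,v,1)=G(u,v)\hat\psi(1)=G(u,v)$, hence $\operatorname{ker}\hat\pi(F)(x,y)=K(x,y)$, and combining with Step 1 gives $\hat\pi(F)g=f$.

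\emph{Main obstacle.} There is no real analytic obstacle; the lemma is a surjectivity/flexibility statement, and the only tempting pitfall is to try to prescribe $g$ first (e.g.\ $g=f$) and then solve for $F$, which would require controlling the range of the non-surjective map $F\mapsto \hat\pi(F)g$. The decoupling via a rank--one kernel together with evaluation at $\tau=1$ being a surjective functional on $\Sch(\RR)$ is precisely what removes the difficulty.
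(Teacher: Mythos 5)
Your argument is correct, but it takes a genuinely different route from the paper's. The paper fixes an arbitrary $g\in\Sch(\RR^n)$ with $\|g\|_2=1$ and writes down $F$ explicitly as $F(x,\xi,\tau)=e^{2\pi i\tau}\phi(\tau)e^{\pi i x\cdot\xi}\operatorname{V}_gf(-x,-\xi)$, where $\operatorname{V}_gf$ is the short-time Fourier transform; the identity $\hat\pi(F)g=f$ is then precisely the STFT inversion formula from \cite{Groechenig}, verified by a direct computation with the definition of $\hat\pi$. You instead work at the level of Schwartz kernels: you observe that $F\mapsto\operatorname{ker}\hat\pi(F)$ maps onto $\Sch(\RR^n\times\RR^n)$, because the linear change of variables $(x,y)\mapsto(y-x,\tfrac{y+x}{2})$ is a bijection of $\Sch(\RR^{2n})$, $\mathcal{F}_2$ is invertible on Schwartz space, and evaluation of $\mathcal{F}_3$ at the point $1$ is surjective (tensoring with any $\psi$ whose Fourier transform equals $1$ there); it then suffices to realize the rank-one kernel $f\otimes h$ with $\langle h,g\rangle=1$. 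Your route is more elementary, avoiding time--frequency machinery, and it proves strictly more, namely that \emph{every} operator with Schwartz kernel arises as some $\hat\pi(F)$. The trade-off is that it leans entirely on the kernel formula $\operatorname{ker}\hat\pi(F)(x,y)=\mathcal{F}_2\mathcal{F}_3F(y-x,\tfrac{y+x}{2},1)$, which the paper asserts just before the lemma without detailed proof, so a fully self-contained version of your argument would have to verify that formula (including its normalization conventions), whereas the paper's computation starts from the definition of $\hat\pi$ itself. The handling of the central variable is essentially identical in both proofs: your requirement $\hat\psi(1)=1$ is the paper's requirement $\int\phi(\tau)\,d\tau=1$ after absorbing the oscillatory factor $e^{2\pi i\tau}$.
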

\begin{proof}
Let $\phi(\tau)\in\Sch(\RR)$  be such that $\int \phi(\tau)d\tau=1$ and  let $g\in\Sch(\RR^n)$ such that $\|g\|_2=1$ then set
\begin{align*}
F(x,\xi,\tau):&=e^{ 2\pi i \tau}\phi(\tau)e^{\pi  i x\cdot \xi}\int f(t)\overline{g(t+x)}e^{2\pi i\xi.t}dt.\\
&=e^{ 2\pi i \tau}\phi(\tau)e^{\pi  i x\cdot \xi}\operatorname{V}_g f(-x,-\xi).
\end{align*}
 here $\operatorname{V}_gf(x,\xi)$ is the Short Time Fourier Transform(STFT).  The inversion formula for  STFT  is given by (see \cite{Groechenig}) :
\[f(t)=\frac{1}{\langle g,g\rangle}\int\operatorname{V}_gf(x,\xi)M_{\xi}T_x g dx d\xi.\]
Then as an immediate consequence of the inversion formula  we have
\begin{align*}
\hat{\pi}(F)g(t)&=\int F(x,\xi,\tau)e^{-2\pi i \tau}e^{-\pi ix.\xi}M_{-\xi}T_{-x}g(t)dxd\xi d\tau\\
&=\int\operatorname{V}_gf(-x,-\xi)M_{-\xi}T_{-x}g(t)dx d\xi=f(t).
\end{align*}
\end{proof}
\end{eg}
\begin{prop}
Let $\rho\in\Sch(\RR^{2n+1})$ be a Schwartz function that satisfies \eqref{nomoment} and let $\rho_{\ep}(x)=\frac{1}{\ep^{2n+1}}\rho(x{\ep})$. Then
$\hat{\pi}\rho_{\ep})$ is a rapidly converging approximation on $\RR^n$.
\end{prop}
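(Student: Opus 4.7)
The plan is to reduce each of the three defining conditions for $T_\ep := \hat\pi(\rho_\ep)$ to the already-established Heisenberg statement, Proposition~\ref{group}, via the intertwining property of the integrated Schr\"odinger representation. The key algebraic identity is
\[ \hat\pi(F)\,\hat\pi(G) \;=\; \hat\pi(F \heis G) \qquad F,G \in \Sch(\RR^{2n+1}), \]
which follows from $\pi(\alpha^{-1})\pi(\beta^{-1}) = \pi((\beta\alpha)^{-1})$ and a change of variables in the double integral defining $\hat\pi(F)\hat\pi(G)$ (up to a left/right convention absorbed into the convolution). From the kernel formula displayed just before Lemma~\ref{cheap}, each $T_\ep$ is a smoothing operator with Schwartz kernel, and a direct scaling computation yields
\[ \operatorname{ker}T_\ep(x,y) \;=\; \ep^{-n}\, (\mathcal{F}_2 \mathcal{F}_3 \rho)\!\left(\tfrac{y-x}{\ep},\, \tfrac{\ep(y+x)}{2},\, \ep\right). \]
From this one reads off directly that every Schwartz seminorm of $\operatorname{ker}T_\ep$ is of moderate polynomial order in $\ep^{-1}$, so that $T_\ep u$ is a moderate net in $\smooth(\RR^n)$ for every $u \in \compsupp(\RR^n)$.

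To establish the rapid convergence on $\smooth_c(\RR^n)$, I would apply Lemma~\ref{cheap} to write a given $f \in \smooth_c(\RR^n)$ as $f = \hat\pi(F)g$ with $g \in \Sch(\RR^n)$, $F \in \Sch(\RR^{2n+1})$. The intertwining identity then gives
\[ T_\ep f - f \;=\; \hat\pi(\rho_\ep \heis F - F)\,g. \]
Proposition~\ref{group}, applied now to the Schwartz function $F$, shows that $\rho_\ep \heis F - F$ is a negligible net in $\Sch(\RR^{2n+1})$: the Taylor-remainder argument given there, combined with the moment conditions \eqref{nomoment} and the Schwartz decay of $F$, yields rapid convergence in each Schwartz seminorm. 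Since the linear map $G \mapsto \hat\pi(G)g$ is continuous from $\Sch(\RR^{2n+1})$ into $\Sch(\RR^n)$ for fixed $g$, the negligibility transfers to $T_\ep f - f$, proving condition~(b). The reconstruction $\lim_\ep T_\ep u = u$ in $\maD'(\RR^n)$ for general $u \in \compsupp(\RR^n)$ then follows by duality: testing against $\phi \in \smooth_c$ gives $\langle T_\ep u,\phi\rangle = \langle u, T_\ep^*\phi\rangle$, and $T_\ep^*\phi \to \phi$ by applying the same argument to the adjoint net $\hat\pi(\rho_\ep^*)$, which is again of the same form.

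For the support-preservation condition~(c), I would work directly with the kernel formula above. If $\chi \in \smooth_c(\RR^n)$ has support disjoint from $\operatorname{supp}u$, then for $x \in \operatorname{supp}\chi$ and $y \in \operatorname{supp}u$ we have $|y-x| \geq c > 0$ while $|x|,|y|$ remain bounded; the Schwartz decay of $\mathcal{F}_2\mathcal{F}_3\rho$ in its first argument forces $\chi(x)\,\operatorname{ker}T_\ep(x,y)$ and all its derivatives to be $O(\ep^N)$ for every $N$ uniformly on the relevant compact set. Hence $\chi \cdot T_\ep u$ is negligible in $\smooth(\RR^n)$, so $\operatorname{supp}_g T_\ep u \subseteq \operatorname{supp}u$; the reverse inclusion is immediate from the convergence $T_\ep u \to u$.

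The main obstacle, in my view, is the Schwartz-seminorm upgrade of the Heisenberg Taylor estimate. In Proposition~\ref{group} it is carried out pointwise on compacta for $f \in \smooth_c(H_n)$, whereas here one must control the remainder uniformly against all weights $(1+|p|^2)^{M/2}$ for a general Schwartz $F$. This involves balancing the linear-in-$p$ growth of the Heisenberg displacement $\operatorname{dist^H}_\ep(p,q) = -\ep q + \tfrac{\ep}{2}\omega(\bar p,\bar q)$ against the Schwartz decay of $F$ and $\rho$; the vanishing of all moments of $\rho$ should make the remainder integrable and yield the required rate, but this is the step that demands the most careful bookkeeping.
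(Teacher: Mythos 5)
Your proposal follows the paper's argument exactly at the crucial step: write $f=\hat{\pi}(F)g$ via Lemma~\ref{cheap}, use the intertwining identity $\hat{\pi}(\rho_{\ep})\hat{\pi}(F)=\hat{\pi}(\rho_{\ep}\heis F)$, and invoke Proposition~\ref{group}; the paper simply declares the remaining conditions ``routine'' where you supply the kernel-scaling and duality arguments. The obstacle you flag at the end --- that the Taylor/moment estimate behind Proposition~\ref{group} is carried out for compactly supported smooth functions on $H_n$, whereas here it must be applied to a Schwartz function $F$ and upgraded to negligibility in all Schwartz seminorms before the continuity of $G\mapsto\hat{\pi}(G)g$ can transfer it to $T_{\ep}f-f$ --- is a genuine point that the paper's own proof silently elides, so your bookkeeping there would in fact strengthen the published argument rather than merely reproduce it.
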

\begin{proof}
We already know that the kernel of $\hat{\pi}(\rho_{\ep})$  is  in $\Sch(|RR^n\times \RR^n)$. We shall here only show that for a compactly supported smooth function $f\in\compsupp(\RR^N)$ the approximation $\hat{\pi}(\rho_{\ep})f$ converges to $f$ rapidly. The proof of other properties is routine.  By lemma \ref{cheap} we have
\[f=\hat{\pi}(F)g\quad \exists F\in \Sch(\RR^{2n+1})\,,\, g\in\Sch(\RR^n).\] 
Therefore
\begin{align*}
{\hat{\pi}(\rho_{\ep})f-f} &=\hat{\pi}(\rho_{\ep})\hat{\pi}(F)g-\hat{\pi}(F)g,\\
&=\hat{\pi}(\rho_{\ep}\heis F-F)g.
\end{align*}
Hence the result follows from  Proposition \ref{group}.
\end{proof}
%%%%%%%%%%%%%%%%%%%%%%%%%%%%%%%%%%%%%%%%%%%%%%%%%%%%%%%%%%%%%%%%%%%%%%%%%%%%%%%%%%%%%%%%%%%%%%%%%%%%%%%%%%%%%%%%%%55
\section{Global algebras of generalized functions}\label{glob_alg}
In this section we shall construct various candidates for algebras of generalized functions. We shall generally refer to all of them  as ``full type algebras''. They shall depend on choice of a set of regularizing processes. For a particular problem such a set of regularizing process might be chosen depending on the symmetries involved. We shall provide a few toy examples in Section \ref{examples}
  On closed manifold $M$  both $\smooth(M)$ and $\smoothing(M)$  forms  nuclear Frech\'et space with jointly continuous  multiplication. 
 Let $E^{\infty}$  be the set of  all maps 
\[\phi:\smoothing(M)\rightarrow \smooth(M)\quad \phi\,\textrm{ a Frech\'et  smooth map}.\]
 Being a space of maps into a commutative algebra $E^{\infty}$  is an algebra under pointwise  operations. 

 By evaluation on operators a distribution defines a map 
\begin{eqnarray}
\label{eval}\Theta_u:\Psi^{\!-\!\infty}(M)\rightarrow \smooth(M)\,\,\,
 \Theta_u(T):=T(u)\,\,\forall \, T\in\smoothing(M).
\end{eqnarray}
 If $K_T(x,y)$ denotes the  integral kernel of $T$ then  the evaluation on $u\in\maD'(M)$ is given by
\[\Theta_u(T)=T(u)=\ip{u(y),k_T(x,y)}.\]
 Thus $u\rightarrow \Theta_u$ is a map from $\maD'(M)$ into $\mathfrak{L}(\smoothing(M),\smooth(M))$ the space of continuous linear maps between these two Frech\`et spaces. This map is clearly an injective map as a section $f\in\smooth(M,\Omega)$ can be used to define a  smoothing map $u\rightarrow u(f)$ which separates distributions. 

 The map $u\rightarrow \Theta_u$ extends to a map on the tensor algebra $\mathfrak{T}\maD'(M)\rightarrow E^{\infty}$ by
\[\rho(u_o\otimes u_1\otimes \ldots\otimes u_r)(T)=T(u_0)T(u_1)\ldots T(u_r).\]
 
   The restriction   to smooth functions  $f\rightarrow \Theta_f\,f\in\smooth(M)$ is however not an algebra homomorphism. By choosing  a subalgebra of $E^{\infty}$  and quotienting  by an ideal containing $f-\Theta_f$ for all $f\in\smooth(M)$  one can easily obtain  an algebra homomorphism on $\smooth(M)$.  There is a choice of  such   a morphism for   any  collection of regularizing processes.

 Let $\maL\subset \maU$ be a set of moderate approximate units or regularizing processes (see Definition \ref{mau}). We say that a smooth map $\phi:\smoothing(M)\rightarrow \smooth(M)$ is moderate over $\maL$ if for all $T_{\ep}\in\maL$ the evaluation  $\phi(T_{\ep})\in E_{\smooth(M)}$  is a   moderate net. Or more elaborately  $\phi$ is moderate over $\maL$ if given an approximate unit $T_{\ep}\in \maL$ and a continuous  seminorm $\rho$ on $\smooth(M)$ there exists an integer $N$ such that:
\[\rho(\phi(T_{\ep}))\sim O(\ep^N).\]
   
 The set of all moderate maps over $\maL$ shall be denoted by $E_{\maL}(M)$.

Similarly $\phi\in E^{\infty}(M)$ is said to be negligible over $\maL$ if for any approximate unit $T_{\ep}$ in $\maL$  $\phi(T_{\ep})$ is a negligible net of smooth functions.  That is for all any seminorm $\rho$  on $\smooth(M)$,
\[\rho(\phi(T_{\ep}))\sim O(\ep^N)\quad \textrm{for all}\, N\in\ZZ.\]

The set of all negligible maps over $\maL$ shall be denoted by $N_{\maL}(M)$. Of course it suffices to check the estimates for $E_{\maL}(M)$ and $N_{\maL}(X)$ only for a family of seminorm that generate the locally convex topology on $\smooth(M)$.

 One can readily check that $N_{\maL}(M)$ is an ideal in $E_{\maL}(M)$. One way to see this is that given a differential operator $D$ there exists differential operators $P_i,Q_i$ such that for any two smooth functions $g,h\in\smooth(M)$
\[D(hg)=\sum P_i(h)Q_i(g).\]
Thus for instance
\[\|D(hg)\|_{L^2(M)}\leq \sum_i \|P_i(h)\|_{L^{\infty}(M)}\|Q_i(g)\|_{L^2(M)}.\]
Let $D$ be an invertible elliptic operator. Let $\phi\in E_{\maL}(M)$ and $\psi\in N_{\maL}(M)$ then for any $T_{\ep}\in\maL$
\[\|D(\phi.\psi(T_{\ep})) \|_{L^2(M)}\leq \sum_i \|P_i(\phi(T_{\ep}))\|_{L^{\infty}(M)}\|Q_i(\psi(T_{\ep}))\|_{L^2(M)}.\]
which proves that the product is in $N_{\maL}(M)$
\begin{definition} Let $\maL$ be a set of  moderate approximate units. The full algebra of generalized functions  over $\maL$ is defined as
\[\maG_{\maL}(M):=\frac{E_{\maL}(M)}{N_{\maL}(M)}.\]
\end{definition}

We note that  if $\maM\hookrightarrow \maL$  is a subset of moderate approximate units then $\maG_{\maL}(M)\hookrightarrow \maG_{\maM}(M)$ thus provides a contravariant functor from subsets of  regularizing processes to generalized functions.

Of course any distribution defines a  map  from $\smoothing(M)$ to $\smooth(M)0$ by evaluation \eqref{eval}. 
\begin{lem}
For any distribution  $u$ the map $\Theta_u\in E(\maU)$. Hence $\Theta_u\in E_{\maL}(M) $ for any set of moderate approximate units $\maL$.
Thus we have an embedding   of $\maD'(M)\rightarrow \maG_{\maL}(M)$.  This  embedding  restricts on $\smooth(M)$ to an  algebra homomorphism 
\end{lem}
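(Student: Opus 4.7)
The plan is to break the statement into three claims and handle each with the already-established machinery. First I verify that $\Theta_u$ actually lies in $E^{\infty}(M)$ and is moderate over $\maU$. Observe that if we identify $T\in\smoothing(M)$ with its kernel $K_T\in\gp{M\times M:\pi_2^*\Omega}$, then
\[
\Theta_u(T)(x)=T(u)(x)=\ip{u(y),K_T(x,y)},
\]
which depends linearly and continuously on $K_T$: continuity follows from the continuity of $u$ as a distribution and the tensor product structure of seminorms on $\gp{M\times M:\pi_2^*\Omega}$. Hence $\Theta_u:\smoothing(M)\to\smooth(M)$ is a continuous linear map between Frech\'et spaces, in particular Frech\'et smooth. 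Applying Lemma~\ref{lcs} to this map shows that it sends moderate nets to moderate nets; since every $T_\ep\in\maU$ is by definition moderate in $\smoothing(M)$, the net $\Theta_u(T_\ep)=T_\ep(u)$ is moderate in $\smooth(M)$. Thus $\Theta_u\in E_{\maL}(M)$ for any $\maL\subseteq\maU$.

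Next I establish injectivity of $u\mapsto[\Theta_u]\in\maG_\maL(M)$. Assume $[\Theta_u]=0$, so $\Theta_u\in N_\maL(M)$: then for some (any) $T_\ep\in\maL$, $T_\ep(u)$ is a negligible net in $\smooth(M)$, and in particular $T_\ep(u)\to 0$ in $\maD'(M)$. But Definition~\ref{mau}(b) gives $T_\ep(u)\to u$, so $u=0$.

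Finally I show the restriction to $\smooth(M)$ is an algebra homomorphism, i.e.\ that $\Theta_{fg}-\Theta_f\cdot\Theta_g\in N_\maL(M)$ for $f,g\in\smooth(M)$. The standard algebraic identity
\[
T_\ep(fg)-T_\ep(f)T_\ep(g)=\bigl(T_\ep(fg)-fg\bigr)-\bigl(T_\ep(f)-f\bigr)T_\ep(g)-f\bigl(T_\ep(g)-g\bigr)
\]
holds pointwise for each $\ep$. By Definition~\ref{mau}(c) each of $T_\ep(fg)-fg$, $T_\ep(f)-f$, and $T_\ep(g)-g$ is negligible in $\smooth(M)$; the factors $T_\ep(g)$ and $f$ are moderate (the former because $g\in\smooth(M)\subset\maD'(M)$ and Step~1 applies, the latter trivially as a fixed smooth function). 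Since $N_\maL(M)$ is an ideal in $E_\maL(M)$ (the argument given earlier in the paper via Leibniz expansion of elliptic operators), each summand is in $N_\maL(M)$, and so is the whole difference.

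I expect no real obstacle: all three steps are formal consequences of Lemma~\ref{lcs} and the defining properties (a)--(c) of moderate approximate units. The one point to be careful about is the algebra-homomorphism step, where one must genuinely use that $N_\maL$ is an ideal (not merely a subspace) so that products of negligible with moderate nets remain negligible; this is exactly what the Leibniz-type estimate preceding the definition of $\maG_\maL(M)$ supplies.
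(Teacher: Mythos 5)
Your proposal is correct and follows essentially the same route as the paper: moderateness of $\Theta_u(T_\ep)$ via continuity of $\Theta_u$ and Lemma~\ref{lcs}, and the algebra-homomorphism property via Definition~\ref{mau}(c) together with the fact that negligible nets form an ideal among moderate nets. You simply supply more detail than the paper does — the explicit three-term expansion of $T_\ep(fg)-T_\ep(f)T_\ep(g)$ (the paper only records $\Theta_f(T_\ep)-f\in N_{\smooth(M)}$ and leaves the rest implicit) and an explicit injectivity argument from Definition~\ref{mau}(b), both of which are fine.
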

\begin{proof}
 Since any $T_{\ep}\in\maU$ is by definition a moderate net of smoothing operators and a distribution $u:\smoothing(M)\rightarrow \smooth(M)$ is a continuous map,   hence  by  Lemma \ref{lcs} $\Theta_u(T_{\ep})\in E_{\smooth(M)}$..
This  embedding  restricts on $\smooth(M)$ to an  algebra homomorphism because for any $T_{\ep}\in\maU$  and any $f\in\smooth(M)$ 
\[\Theta_f(T_{\ep})-f=T_{\ep}f-f\in N_{\smooth(M)},\]
again by Definition \ref{mau}.
\end{proof}

%\section{Operators extenndeble to $\maG$ and invariant operators}

\subsection{Operatons of generalized functions }\label{operations}
Next we move to action of diffeomorphisms on these algebras and we check that the embedding of  distributions is equivariant with respect to diffeomorphism action.

\subsubsection{Action of diffeomorphism}
Let $\chi:M\rightarrow M$ be a diffeomorphism on $M$. Then $\chi$ acts on  $\smoothing(M)$ by push forward of operators as:
\[\chi_*(T)(f):=\chi^*T(\chi^{-1*}f).\]
 Let $\mu_y\Omega$  be a nonzero section of the density bundle on $M$. Let $T$  be given by  a kernel $k_T(x,y)\mu_y\in\Gamma(M\times M,\pi_2^*\Omega)$ then the kernel of $\chi_*T$ is given by  $k_T(\chi x,\chi y)\chi^{-1^*}\mu_y$.

  We extend the action of $\Op{Diff}(M)$ on $\phi\in E^{\infty}$ by 
\[{\chi^*\phi(T):=\chi^*(\phi(\chi_*(T)))}.\]
The composition can be seen as the following diagram,
\begin{center}
$ \xymatrix{ {\smoothing(M)} \ar[d]^{\chi_*} \ar@{.>}[r]^{\chi^*\phi}&{\smooth(M)}\\
{\smoothing(M)} \ar[r]^{\phi}&{\smooth(M)}\ar[u]^{\chi^*}}$
\end{center}
\begin{lem}
The embedding of $\maD'(M)$ in $E^{\infty}$ is equivariant under diffeomorphisms that is
\[\chi^*(\Theta_u)=\Theta_{\chi^*u}.\]
\end{lem}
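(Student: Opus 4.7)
The plan is to evaluate both sides at an arbitrary smoothing operator $T\in\smoothing(M)$ and unfold the definitions. By the definition of the $\Op{Diff}(M)$-action on $E^{\infty}$ and of the evaluation map $\Theta_u$,
\[
(\chi^*\Theta_u)(T)\;=\;\chi^*\bigl(\Theta_u(\chi_*T)\bigr)\;=\;\chi^*\bigl((\chi_*T)(u)\bigr),
\]
while directly from the definition of $\Theta_{\chi^*u}$,
\[
\Theta_{\chi^*u}(T)\;=\;T(\chi^*u).
\]
Thus the whole lemma reduces to the single compatibility identity
\[
\chi^*\bigl((\chi_*T)(u)\bigr)\;=\;T(\chi^*u).
\]

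To verify this, I would write $T$ in terms of its kernel $k_T\in\gp{M\times M,\pi_2^*\Omega}$ and apply the definition $\chi_*(T)(f)=\chi^*(T(\chi^{*^{-1}}f))$ from Section \ref{regu} with $f=u$, extending it verbatim from smooth functions to distributions by the continuity of $\chi^*$ on $\maD'(M)$. This rewrites $(\chi_*T)(u)$ as $\chi^{*^{-1}}\bigl(T(\chi^*u)\bigr)$, once the density factor built into $\pi_2^*\Omega$ is transported consistently with the definition of pullback of distributions; equivalently, it is the change of variables in the formal integral $(\chi_*T)(u)(x)=\int k_T(\chi^{-1}x,\chi^{-1}y)\,u(y)\,\mu_y$. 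Applying the outer $\chi^*$ cancels $\chi^{*^{-1}}$ and produces $T(\chi^*u)$, as required.

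In fact, the statement can be read directly off the commutative square displayed just before the lemma, specialized to $\phi=\Theta_u$: the $\Op{Diff}(M)$-action on $E^{\infty}$ is designed precisely so that pre-composing $\Theta_u$ with $\chi_*$ and post-composing with $\chi^*$ coincides with the evaluation functional associated to $\chi^*u$, which is by definition $\Theta_{\chi^*u}$.

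The main obstacle is purely notational. The symbol $\chi^*$ appears simultaneously in four semantically distinct guises — on $\smooth(M)$, on $\maD'(M)$, on $\smoothing(M)$ via conjugation, and as a Jacobian factor on sections of $\pi_2^*\Omega$ — and one must check that these have been set up so that the two chains of $\chi^*/\chi_*$ telescope. Once the conventions are pinned down the identity above is a one-line cancellation, and the lemma becomes the intended tautology that the embedding $u\mapsto\Theta_u$ is equivariant by construction.
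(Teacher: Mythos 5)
Your proof is correct and follows essentially the same route as the paper: evaluate both sides on a smoothing operator $T$, unfold the definitions of the $\Op{Diff}(M)$-actions, and reduce everything to a change of variables in the kernel pairing $\ip{u(y),k_T(x,y)\mu_y}$. The one caveat is that your unwinding $(\chi_*T)(u)=\chi^{*^{-1}}\bigl(T(\chi^*u)\bigr)$ inverts the paper's displayed convention $\chi_*(T)(f)=\chi^*(T(\chi^{*^{-1}}f))$ --- but yours is the reading under which the two chains actually telescope to $T(\chi^*u)$, and the paper's own computation (which concludes with $\chi_*\Theta_u(T)$ rather than $\chi^*\Theta_u(T)$) exhibits the same $\chi\leftrightarrow\chi^{-1}$ ambiguity, so this is a shared notational wrinkle that you correctly identify as the crux, not a gap in your argument.
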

\begin{proof}
Let  $T $  be a smoothing operator with kernel $k_T(x,y)\mu_y$ then  
\begin{align*}
\Theta_{\chi^*u}(T)&=\chi^*u(T)=\ip{u(\chi^*u(y),k_t(x,y)\mu_y}\\
&=\ip{u(y),k_T(x,\chi(y))\chi^{-1^*}\mu_y}\\
&=\chi^{-1^*}\ip{u(y),k_T(\chi(x),\chi(y))\chi^{-1^*}\mu_y}=\chi_*\Theta_u(T).
\end{align*}
\end{proof}
\begin{cor}\label{inv}
Let $X\subset \maU$ be a set of moderate units. Now let $\chi$ be a diffeomorphism then $\chi^*:\maG_X(M)\rightarrow \maG_{\chi_*(X)}(M)$  and hence if $\chi_*(X)=X$ then the action of $\chi$ descends  naturally to an action  on $\maG_X(M)$.
\end{cor}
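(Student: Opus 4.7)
The plan is to reduce the corollary to checking that $\chi^*$ carries the ideal $N_X(M) \subset E_X(M)$ into $N_{\chi_*(X)}(M) \subset E_{\chi_*(X)}(M)$; the descent to the quotients $\maG_X(M)\to\maG_{\chi_*(X)}(M)$ is then formal, and the specialization $\chi_*(X)=X$ yields the asserted action on $\maG_X(M)$.

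First I would invoke the functoriality of Lemma \ref{lcs}: a continuous linear map between locally convex spaces takes moderate nets to moderate nets and negligible nets to negligible nets. In the present setting the two building blocks of the composition $\chi^*\phi=\chi^*\circ\phi\circ\chi_*$ displayed in the diagram before the corollary, namely $\chi_*:\smoothing(M)\to\smoothing(M)$ (pushforward of Schwartz kernels on $M\times M$) and $\chi^*:\smooth(M)\to\smooth(M)$ (pullback by a diffeomorphism), are both continuous linear, so each preserves moderateness and negligibility individually.

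Next I would unravel the composition on a representative. Given $\phi\in E_X(M)$ and a net of regularizing processes in the relevant target class, I use the bijectivity of $\chi_*$ on $\maU$ (established in the lemma immediately preceding Corollary \ref{inv}) to re-index: the inner $\chi_*$ translates the input back into $X$, applying $\phi$ then produces a moderate net in $\smooth(M)$ by hypothesis, and the outer $\chi^*$ preserves moderateness by the functoriality statement above. Repeating the identical chain with \emph{negligible} in place of \emph{moderate} handles $N_X(M)\to N_{\chi_*(X)}(M)$.

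Finally I would verify that $\chi^*$ is an algebra homomorphism on $E^{\infty}$: this is immediate because $\chi^*:\smooth(M)\to\smooth(M)$ is itself an algebra homomorphism (the pullback of a pointwise product is the product of pullbacks), and pre-composition by $\chi_*$ is unaffected by pointwise multiplication in the range. Hence $\chi^*$ descends to an algebra map $\maG_X(M)\to\maG_{\chi_*(X)}(M)$, and the invariant case $\chi_*(X)=X$ then gives the action on $\maG_X(M)$. No step is genuinely difficult; the only point requiring care is tracking the bijection $\chi_*:\maU\to\maU$ consistently between source and target sets of regularizing processes, which is purely formal once Lemma \ref{lcs} is in hand.
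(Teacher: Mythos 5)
Your argument is correct and is exactly the verification the paper leaves implicit: Corollary \ref{inv} is stated there without proof, as an immediate consequence of the definition $\chi^*\phi=\chi^*\circ\phi\circ\chi_*$, the lemma in Section \ref{regu} showing that $\chi_*$ preserves $\maU$ (which is what guarantees $\chi_*(X)\subset\maU$, so that $\maG_{\chi_*(X)}(M)$ is even defined), and the functoriality Lemma \ref{lcs} applied to the outer pullback $\chi^*$ on $\smooth(M)$. The one point worth flagging is the direction of your re-indexing: with the paper's definition, moderateness of $\phi$ over $X$ literally yields moderateness of $\chi^*\phi$ over $\chi_*^{-1}(X)$ rather than over $\chi_*(X)$, but this labeling discrepancy is inherited from the statement of the corollary itself and is immaterial in the only case actually used, namely $\chi_*(X)=X$.
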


\subsubsection{Pseudodifferential operators}
The smoothing operators $\smoothing(M)$ form an ideal in the algebra of pseudodifferential operators $\Psi^{\infty}(M)$ hence it is very easy to define an action of  pseudodifferential operator $P$ on the  space $E^{\infty}$ that extends their action on $\maD'(M)$. We define the operator on $\phi\in E^{\infty}$ by,
\begin{eqnarray}\label{pseudo}
P\phi(T):=\phi(TP),\quad \phi\in E^{\infty}\,,\, T\in\smoothing(M).
\end{eqnarray}
 This is indeed an extension of the operators on $\maD'(M)$ as 
\[P\Theta_u(T)=\Theta_u(TP)=TP(u)=\Theta_{Pu}(T).\]

Let $\hat{E}$ be the ring of  polynomially bounded smooth maps on the complex plane.  More precisely the set of all maps
\[u:\CC \rightarrow \CC \quad\exists p\in\CC[z]\,\,|\,\,|u(z)|\leq |p(z)|\forall \,\, z.\]
 Every element $u\in\hat{E}$ induces a map $u_*:\tilde{\CC}\rightarrow\tilde{\CC}$. Let $\hat{N}$ be all elements in $\hat{E}$ such that  the induce map is $0$  map or $\hat{N}=\Op{ker}(u\rightarrow u_*)$. We define 
\[ \hat{\CC}:=\frac{\hat{E}}{\hat{N}}.\]

Then it is clear that  $\hat{\CC}$ is a ring  and  all the algebras $\maG_{\maL}(M)$ are  algebras over the ring $\hat{\CC}$. by the  action 
\[u.\phi:\smoothing(M)\rightarrow \smooth(M)\qquad u.\phi(T)=u(\Op{Tr}(T))\phi(T).\]

Here  $\Op{Tr}(T)=\int_Mk_T(y,y)\mu_y$ is the operator trace of $T$.
%%%%%%%%%%%%%%%%%%%%%%%%%%%%%%%%%%%%%%%%%%%%%%%%%%%%%%%%%%%%%%%%%%%%%%%%%%%%%%%%%%%%%%%%%%%%%%
\section{Examples}\label{examples}
\subsection{Riemannian manifolds}
Let $M$ be a closed Riemannian manifold and let $\Delta$ be the associated scalar Laplace operator.Let $f\in\Sch(\RR)$ be a Schwartz function with $f\equiv 1$ near the origin . Then $T_{\ep}:=f_{\ep}(\Delta)$  is a n moderate approximate unit and let $X_f=\{T_{\ep}\}$ be the singleton set. Since the Laplace operator is invariant under  isometries $X_f$ is  invariant under the group of  isometries $\Gamma:=\operatorname{Iso}(M)$. Therefore $\maG_{X_f}(M)$ has a natural action of $\Gamma$ which is equivariant under the embedding of the distributions in  $\maG_{X_f}(M)$. 
\begin{prop}
Let $f\in\Sch(\RR) $ be a real valued Schwartz function  with  $f\equiv 1$ near the origin and $f(x)$ is monotone on $(0,\infty)$and  strictly monotonically decreasing on $(1,\infty)$.
Let $\maG^s(M)$ denote the special Colombeau algebra with smooth nets then 
$\maG_{X_f}(M)$ is  naturally algebra isomorphic  to $\maG^s(M)$.
\end{prop}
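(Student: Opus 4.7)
\emph{Plan.} The key observation is that $X_f=\{T_{\ep}\}$ is a \emph{singleton}, so a moderate map $\phi\in E_{X_f}(M)$ is constrained only through its value $\phi(T_{\ep})$. I would build mutually inverse algebra homomorphisms $\Phi:\maG_{X_f}(M)\to\maG^s(M)$ via evaluation and $\Psi:\maG^s(M)\to\maG_{X_f}(M)$ via a smooth retraction.

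First, set $\Phi([\phi]):=[\phi(T_{\ep})]$. By the very definitions of $E_{X_f}(M)$ and $N_{X_f}(M)$, a moderate/negligible $\phi$ is sent to a moderate/negligible net, and both products are pointwise, so $\Phi$ descends to a well-defined algebra homomorphism. Injectivity is immediate: if $\phi(T_{\ep})$ is negligible then $\phi\in N_{X_f}(M)$ by definition.

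Surjectivity rests on constructing a Fr\'echet-smooth retraction $\tau:\smoothing(M)\to(0,1)$ with $\tau(T_{\ep})=\ep$. Let $s(\ep):=\Op{Tr}(T_{\ep})=\sum_k f(\ep\lambda_k)$, where $\{\lambda_k\}$ are the eigenvalues of $\Delta$; since $f\in\Sch(\RR)$, Weyl's law yields absolute convergence and smoothness in $\ep$. A key computation gives
\[
s'(\ep)=\sum_k \lambda_k f'(\ep\lambda_k)\le 0,
\]
with strict inequality, because monotonicity of $f$ on $(0,\infty)$ makes each summand non-positive while Weyl's asymptotic always supplies some $k$ with $\ep\lambda_k>1$, where $f'(\ep\lambda_k)<0$ by the strict monotonicity hypothesis. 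Thus $s$ is a smooth diffeomorphism of $(0,1)$ onto an interval $J\subset\RR$. I would extend $s^{-1}$ smoothly to a function $\chi:\CC\to(0,1)$ by a cutoff and set $\tau(T):=\chi(\Op{Tr}(T))$, which is Fr\'echet smooth on $\smoothing(M)$.

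Given a moderate net $u_{\ep}\in E(\smooth(M))$, regarded as a smooth curve $(0,1)\to\smooth(M)$, define $\phi_u(T):=u_{\tau(T)}$. This is Fr\'echet smooth as a composition, is multiplicative and preserves negligibility in $u$, and satisfies $\phi_u(T_{\ep})=u_{\ep}$. Setting $\Psi([u_{\ep}]):=[\phi_u]$ gives $\Phi\circ\Psi=\Op{id}$ on the nose, and combined with the injectivity of $\Phi$ this establishes the isomorphism; naturality with respect to the auxiliary choice of $\tau$ is automatic, since any two such retractions produce maps agreeing on $T_{\ep}$ and thus equivalent modulo $N_{X_f}(M)$. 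The main obstacle is the monotonicity argument controlling $s'(\ep)$, which fundamentally uses \emph{both} parts of the hypothesis on $f$ together with Weyl's asymptotic; once $\tau$ is in hand the isomorphism is formal, reflecting the idea that a function on a single net extends canonically to a one-parameter family in the quotient.
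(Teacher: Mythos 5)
Your proof is essentially correct, and while the injectivity half (evaluation $[\phi]\mapsto[\phi(T_{\ep})]$) coincides with the paper's, your surjectivity argument takes a genuinely different route. The paper shows that $\ep\mapsto T_{\ep}$ is a smooth \emph{embedded curve} in $\smoothing(M)$ (injectivity and the immersion property from monotonicity of $f$, the homeomorphism-onto-image property via the spectral functionals $j_n(T)=\ip{T\phi_n,\phi_n}$), and then extends the curve $\tilde\phi(T_{\ep})=u_{\ep}$ to a smooth map on all of $\smoothing(M)$ by invoking $\smooth$-paracompactness of nuclear Fr\'echet spaces (Kriegl--Michor, Thm.\ 16.10). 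You instead build an \emph{explicit} smooth retraction $\tau=\chi\circ\Op{Tr}$ using the heat-trace-like function $s(\ep)=\sum_k f(\ep\lambda_k)$, and define the extension by $\phi_u=u\circ\tau$; this replaces the abstract extension theorem by a concrete continuous linear functional and a one-variable calculus argument, which is arguably cleaner and makes the multiplicativity and well-definedness of the inverse transparent. Your global use of the trace is in spirit a summed version of the paper's pointwise use of the $j_n$. One caveat you share with the paper: strict monotonic decrease of $f$ on $(1,\infty)$ gives that $s$ is strictly decreasing (for $\ep_1<\ep_2$ some term with $\ep_1\lambda_k>1$ drops strictly), but it does not by itself force $f'(\ep\lambda_k)<0$ at any \emph{particular} point, so $s'(\ep)<0$ everywhere --- needed for $s^{-1}$ to be smooth --- requires either a slightly strengthened hypothesis (e.g.\ $f'<0$ on $(1,\infty)$) or an additional argument; the paper's claim that $\frac{d}{d\ep}T_{\ep}=F'_{\ep}(\Delta)\neq 0$ has exactly the same soft spot. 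Modulo that shared point, your construction is sound.
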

\begin{proof}
Let $\phi:\smoothing(M)\rightarrow \smooth(M)$ represent $\alpha\in\maG_{X_f}(M)$  The map
\[\rho(\alpha):=[\phi(T_{\ep})]\in\maG^s(M),\]
shall be the required isomorphism.

It  is evident that $\rho:\maG_{X_f}(M)\rightarrow \maG^s(M)$ is well defined, injective algebra morphism. To check the surjectivity of $\rho$ we first note that by our assumption on $f$ it  follows that $\ep\rightarrow T_{\ep}$ is a smooth embedded curve in $\smoothing(M)$. To see this we check,
\begin{itemize}
\item  The map $\ep\rightarrow T_{\ep}$ is injective: If $\ep\neq\delta$ then we have $f_{\ep}(x)\neq f_{\delta}(x)$ for all $x$ large enough  and hence $f_{\ep}(\Delta)\neq f_{\delta}(\Delta)$.
\item  It is an immersion as $\frac{d}{d\ep} T_{\ep}=F'_{\ep}(\Delta)\neq0$.
\item  It homeomorphism on its image: Let $\lambda_n,\phi_n$ be respectively the eigenvalues and eigenvectors of the Laplace (say ordered and counted with multiplicity.)  For any  $n\in\NN$ the  map 
\[j_n:\smoothing(M)\rightarrow \CC\quad j_n(T):=\ip{T(\phi_n),\phi_n},\]
 is a continuous map such that $j_n(F_{\ep}(\Delta))=F(\ep\lambda_n).$
At each $\delta\in(0,1)$ there exists $k$ such that $F(\delta \lambda_k)=C_{\delta}\leq \frac{1}{2}$. By monotonicity of $F$ there is an interval $(\delta-\tau,\delta+\                                tau)$ such that  The inverse image of an open ball, of radious $r<\frac{C_{\delta}}{3}$ around $C_{\delta}$ in $\CC$ under $j_k$ intersects $T_{\ep}$ in  an interval. Since $\smoothing(M)$ is a meterizable this is enough to proof that the image is  homeomorphic to $(0,1)$.
\end{itemize}

Now given an element $\beta\in\maG^s(M)$ we pick a smooth representative $u_{\ep}$ . The map $\tilde{\phi}(T_{\ep})=u_{\ep}$ is a smooth map from an embedded submanifold and hence can be extended to all of $\smoothing(M)$. This is because $\smoothing(M)$ is a nuclear Frech\'et space and hence is $\smooth$-paracompact (see \cite{KM}  Theorem 16.10). We call one such extension $\phi$ Then $\rho([\phi])=\beta$.
\end{proof}

\subsection{Symplectic manifolds}
Let $(M,\omega)$ be a closed symplectic manifold.  Recall that an almost complex structure on $M$  is   is a bundle map $J:TM\rightarrow TM$ such that $ J^2=-\operatorname{Id}$.

A metric $\mathscr{G}$  is compatible with $\omega$ if there exists an  almost complex structure $J$ such that 
\[\omega(Jx,JY)=\mathscr{G}(X,Y), \quad X,Y\,\,\textrm{vectorfields.}\]

By polar decomposition on any metric one knows that compatible metrics and almost complex structures always exist. Let $\mathscr{G}(\omega)$ be the set  of all compatible Riemannian metrics and let $\Delta(\omega)$ be the set of all  scalar Laplaces associated with compatible Riemannian metrics. Let $f\in\Sch(\RR)$ be a Schwartz function $f\equiv 1$ near origin. Let
\[X(\omega,f):=\{f_{\ep}(\Delta)|\,\Delta\in\Delta(\omega)\}.\]
 Then $X(\omega,f)$  is a set of moderate approximate units.

\begin{lem}
The set of approximate units $X(\omega,f)$  is invariant under the group of symplectic diffeomorphism $Symp(M)$.
\end{lem}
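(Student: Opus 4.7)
The plan is to show that for each symplectomorphism $\chi \in \operatorname{Symp}(M)$ and each generator $T_\ep = f_\ep(\Delta_{\mathscr{G}}) \in X(\omega,f)$, the push-forward $\chi_*(T_\ep)$ equals $f_\ep(\Delta_{\mathscr{G}'})$ for some other compatible metric $\mathscr{G}'$. This requires two ingredients: (i) the pull-back of a compatible metric along a symplectomorphism is again compatible with $\omega$, and (ii) under a diffeomorphism, the functional calculus of the Laplacian commutes with push-forward, so $\chi_*(f_\ep(\Delta_{\mathscr{G}})) = f_\ep(\Delta_{\chi^*\mathscr{G}})$.

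For (i), let $J$ be the almost complex structure associated with $\mathscr{G}$, so that $\omega(JX,JY) = \mathscr{G}(X,Y)$. Define $J' := (d\chi)^{-1} \circ J \circ d\chi$. Then $(J')^2 = -\operatorname{Id}$, so $J'$ is an almost complex structure. Using $\chi^*\omega = \omega$, one checks
\begin{equation*}
\omega(J'X, J'Y) = (\chi^*\omega)(J'X, J'Y) = \omega(J\,d\chi X,\, J\,d\chi Y) = \mathscr{G}(d\chi X, d\chi Y) = (\chi^*\mathscr{G})(X,Y),
\end{equation*}
so $\chi^*\mathscr{G} \in \mathscr{G}(\omega)$ with compatible almost complex structure $J'$, and hence $\Delta_{\chi^*\mathscr{G}} \in \Delta(\omega)$.

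For (ii), $\chi \colon (M,\chi^*\mathscr{G}) \to (M,\mathscr{G})$ is by construction an isometry, so by naturality of the scalar Laplacian, $\chi^* \Delta_{\mathscr{G}} = \Delta_{\chi^*\mathscr{G}} \chi^*$, i.e.\ $\Delta_{\chi^*\mathscr{G}} = \chi^* \Delta_{\mathscr{G}} \chi^{-1*}$. Continuous functional calculus then gives $f_\ep(\Delta_{\chi^*\mathscr{G}}) = \chi^* f_\ep(\Delta_{\mathscr{G}}) \chi^{-1*}$, which by the very definition of push-forward of smoothing operators (\S\ref{operations}) equals $\chi_*(f_\ep(\Delta_{\mathscr{G}}))$. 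Combining with (i), this element lies in $X(\omega,f)$, so $\chi_* X(\omega,f) \subseteq X(\omega,f)$; applying the argument to $\chi^{-1}$ yields equality, proving invariance.

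The main obstacle, such as it is, is purely bookkeeping: one must be careful with the direction of pull-back versus push-forward (in particular, whether to pull back $\mathscr{G}$ by $\chi$ or by $\chi^{-1}$) so that the isometry conjugation reproduces the push-forward formula $\chi_*(T)(g) = \chi^*(T(\chi^{-1*}g))$ exactly. Nothing deeper is required, since compatibility of metrics is a pointwise linear-algebra condition and functional calculus commutes with conjugation by an isomorphism.
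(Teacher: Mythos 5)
Your proof is correct and follows essentially the same route as the paper: decompose the claim into (a) symplectomorphisms preserve the set of compatible metrics, (b) the push-forward of a Laplacian is the Laplacian of the transported metric, and (c) functional calculus commutes with push-forward; you merely verify (a) and (b) in more detail than the paper, which states these three facts and cites them. The only divergence is a harmless bookkeeping one: the paper attributes $\chi_*(\Delta_{\mathscr{G}})$ to the metric $\chi^{-1*}\mathscr{G}$ while your computation gives $\chi^*\mathscr{G}$, but since $\mathscr{G}(\omega)$ is preserved by both $\chi$ and $\chi^{-1}$ this does not affect the invariance of $X(\omega,f)$.
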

\begin{proof}
  Let $\Delta $ be  the scalar Laplace operator associated with the metric  $\mathscr{G}$ and let  $\phi $ be a  diffeomorphism then the push foreword operator $\phi_*(\Delta)$ is the scalar Laplace associated to the metric $\phi^{-1^*}(\mathscr{G})$.

Also  for  any  $f\in\Sch(\RR)$  a Schwartz function
\[f(\phi_*(\Delta))=\phi_*(f(\Delta)).\]
Thus  the desired result follows  from the fact that if  $\phi$ is a symplectic diffeomorphism then $\phi$ preserves the  space of  compatible metrics $\mathscr{G}(\omega)$.
\end{proof}
    Thus combining  with Corollary \ref{inv} we have the following theorem.
\begin{theo}
The full type algebra defined by the set of approximate units $X(\omega,f)$ has an action of $Symp(M)$ and the natural embedding of distributions is equivariant under this action.
\end{theo}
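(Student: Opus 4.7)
The plan is to combine the immediately preceding lemma with Corollary \ref{inv} and the diffeomorphism-equivariance lemma from Section \ref{operations}; in fact almost nothing new needs to be proved, because the theorem is a direct specialization of results already established for arbitrary diffeomorphisms.

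First I would observe that the preceding lemma gives $\phi_*(X(\omega,f)) = X(\omega,f)$ for every $\phi \in \operatorname{Symp}(M)$. Applying Corollary \ref{inv} with $X = X(\omega,f)$ and $\chi = \phi$ immediately yields a well-defined map $\phi^* : \maG_{X(\omega,f)}(M) \to \maG_{X(\omega,f)}(M)$. To promote this to a group action I would verify the composition law $(\phi_1 \circ \phi_2)^* = \phi_2^* \circ \phi_1^*$ on a representative $\psi \in E^{\infty}$, which follows mechanically from the defining formula $\chi^*\psi(T) = \chi^*(\psi(\chi_*(T)))$ together with the functoriality of push-forward of smoothing operators and pull-back of smooth functions; the identity diffeomorphism clearly acts trivially.

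For equivariance of the embedding, the lemma preceding Corollary \ref{inv} already established $\chi^*(\Theta_u) = \Theta_{\chi^* u}$ for \emph{all} diffeomorphisms $\chi$, and this identity in particular holds when $\chi \in \operatorname{Symp}(M)$. Thus the embedding $u \mapsto \Theta_u$ commutes with the $\operatorname{Symp}(M)$-action on both sides.

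The only step that deserves care is the well-definedness of the quotient action, i.e.\ that $\chi^*$ preserves $E_{X(\omega,f)}(M)$ and $N_{X(\omega,f)}(M)$; this is the content of Corollary \ref{inv}, but let me sketch why it works here. Given $\psi \in E_{X(\omega,f)}(M)$ and $T_{\ep} \in X(\omega,f)$, evaluation gives $\chi^*\psi(T_{\ep}) = \chi^*(\psi(\chi_*T_{\ep}))$; by the symplectic invariance $\chi_*T_{\ep} \in X(\omega,f)$, so $\psi(\chi_*T_{\ep})$ is moderate (resp.\ negligible), and then applying the continuous linear pull-back $\chi^*$ preserves the estimate by Lemma \ref{lcs}. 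So there is no genuine obstacle: the theorem is a bookkeeping assembly of the invariance lemma, the equivariance lemma, and the functorial corollary.
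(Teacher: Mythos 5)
Your proposal is correct and follows the paper's own route exactly: the paper likewise derives the theorem by combining the preceding lemma (symplectic invariance of $X(\omega,f)$) with Corollary \ref{inv} and the equivariance identity $\chi^*(\Theta_u)=\Theta_{\chi^*u}$. Your additional checks of the composition law and well-definedness on the quotient are sound elaborations of what the paper leaves implicit.
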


%%%%%%%%%%%%%%%%%%%%%%%%%%%%%%%%%%%%%%%%%%%%%%%%%%%%%%%%%%%%%%%

\section{Regularity}\label{PT}
In this section we study the  regularity structure of  distributions  within the settings of our generalized functions.

We first  consider regularity of maps between  Frech\'et spaces.  Recall that a  grading on a Frech\'et space $X$  is a  sequence of  seminorms $\|\,\|_n$ that  is increasing (that is  $\|\,\|_1\leq\|\,\|_2\leq\ldots $) and  generates the locally convex topology  on $X$. We refer to \cite {Hamilton} for further  study. 
\begin{definition} 
Let $X$ and $Y$  be graded Frech\'et spaces.  We denote by $\|.\|_n$ and $\|.\|'_n$ the $n$-th graded norm on $X$  and $Y$ respectively. We say that a Frech\'et  smooth map $\phi:X\rightarrow Y$ is polynomially  tame if there exist $b,k\in\NN$  and some $r\in\ZZ$ such that
\begin{eqnarray}\label{tamer}
\|\phi(x)\|'_n\leq C\|x\|^k_{n+r}\quad \textrm {for all}\, n\geq b+|r|.
\end{eqnarray}
Here $C>0$ is a constant that depends only on $n$.The number $r$  is called the degree of tameness and the set of all maps  of tameness degree $r$  is denoted by $\Op{PT}^r(X,Y)$. Let $\Op{PT}(X,Y):=\cup_r\Op{PT}^r(X,Y)$.

A polynomially tame map is called regular if there exists a $k$  and there exists  a $b$ such that for any degree of  tameness $r\in\ZZ$ the condition \eqref{tamer} holds. It is clear that
 \[\Op{Reg}(X,Y)\subseteq \cap_r\Op{PT}^r(X,Y).\]
\end{definition}
 We would also require the following tameness  property for  a associative multiplication on a Frech\'et  space.
\begin{definition}\label{FA}
We say that $X$ is a Frech\'et  algebra if it is a Frech\'et space with an associative product and the multiplication is jointly continuous. A graded Frech\'et algebra is a Frech\'et algebra  such that the   multiplication satisfies a tameness  condition namely there exist $b,r_1,r_2\in\NN$ such that
\[\|x\cdot y\|_n\leq C\|x\|_{n+r_1}\|y\|_{n+r_2}\quad \forall \,\,n\geq b.\]
\end{definition}
With the above definition the following   lemma is self -evident.
\begin{lem}
Let $Y$ be a graded Frech\'et algebra. And let $X$ be any graded Frech\'et space. then the space of polynomially tame maps from $X$ to $Y$ is an algebra under pointwise operations. The regular maps form a subalgebra.
\end{lem}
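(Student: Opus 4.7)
The lemma is essentially a bookkeeping exercise combining two ingredients: the graded multiplicativity axiom on $Y$ from Definition \ref{FA}, and the monotonicity of the grading norms. Closure under pointwise addition is immediate from the triangle inequality, so the content is closure under pointwise multiplication.

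Given $\phi_1,\phi_2\in\Op{PT}(X,Y)$ with tameness data $(r_1,k_1,b_1)$ and $(r_2,k_2,b_2)$, and graded-algebra constants $s_1,s_2,b_0$ for $Y$, I would first estimate
\[
\|\phi_1(x)\phi_2(x)\|'_n \;\leq\; C\,\|\phi_1(x)\|'_{n+s_1}\,\|\phi_2(x)\|'_{n+s_2}\qquad(n\geq b_0),
\]
and then substitute the individual tameness bounds $\|\phi_i(x)\|'_{n+s_i}\leq C_i\|x\|_{n+s_i+r_i}^{k_i}$, valid once $n+s_i\geq b_i+|r_i|$. Setting $r:=\max(s_1+r_1,\;s_2+r_2)$ and using monotonicity $\|x\|_m\leq \|x\|_{m'}$ for $m\leq m'$, both factors get bounded by a power of $\|x\|_{n+r}$, giving
\[
\|\phi_1(x)\phi_2(x)\|'_n \;\leq\; C'\,\|x\|_{n+r}^{k_1+k_2}
\]
for all $n\geq b+|r|$ with $b$ built from $b_0,b_1,b_2,|s_1|,|s_2|,|r_1|,|r_2|$. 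Hence $\phi_1\phi_2\in\Op{PT}^r(X,Y)$, with degree $r$ and exponent $k_1+k_2$.

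For the regular subalgebra, the key observation is that when $\phi_1,\phi_2\in\Op{Reg}(X,Y)$ the constants $k_i,b_i$ in \eqref{tamer} do not depend on the degree. Given any target $r\in\ZZ$, I would apply the $\phi_i$-tameness at the shifted degree $r-s_i$, obtaining the same estimate above but now with $k_1+k_2$ and $b$ independent of $r$. The only thing to verify is that the resulting threshold still has the form $b+|r|$ with $b$ independent of $r$, which follows from the crude bound $|r-s_i|\leq |r|+|s_i|$ absorbing the $r$-dependence of the individual thresholds into the $|r|$ term. No step here is a real obstacle; the subtlety, such as it is, lies only in this last bookkeeping check in the regular case.
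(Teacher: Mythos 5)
Your proposal is correct and follows essentially the same route as the paper, which declares the lemma ``self-evident'' and records only the single displayed estimate $\|\phi\cdot\psi(x)\|_n\leq C\|\phi(x)\|_{n+r_1}\|\psi(x)\|_{n+r_2}$ coming from the graded-algebra axiom of Definition \ref{FA}; you have simply carried out the substitution of the individual tameness bounds, the monotonicity step, and the threshold bookkeeping that the paper leaves implicit. The resulting degree $\max(s_1+r_1,s_2+r_2)$, exponent $k_1+k_2$, and the $|r-s_i|\leq|r|+s_i$ check for the regular case are all exactly what is needed.
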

The  algebra of polynomially tame  maps  $\operatorname{PT}(X,Y)$ and the regular maps as $\operatorname{Reg}(X,Y)$  depend on  not just the  topology of $X$ and $Y$  but also on the choice of the grading structures   on them. Equivalent gradings provide the same algebras. We shall grade  $\smooth(M)$  with Sobolev norms. Let $\Delta$ denote  the Laplace operator on $M$ associated to a metric then,
\[\|f\|_n:=\|(1+\Delta)^{\frac{n}{2}}f\|_{L^2(M)}.\]
 With the above grading  $\smooth(M)$  is a graded Frech\'et algebra  in the sense of Definition  \ref{FA}.

With the metric we identify $\smoothing(M)=\smooth(M\times M)$.
Define a grading on $\smoothing(M)$  by:
\[
\|T\|_n:=\sum_{q+p=n,q,p\geq -n}\|(1+\Delta)^{\frac{p}{2}}T(1+\Delta)^{\frac{q}{2}}\|_{\operatorname{HS}}.
\]
For any operator $D:L^2(M)\rightarrow L^2(M)$ we denote by $\|D\|_{\operatorname{HS}}$  its  Hilbert-Schmidt norm.
 With the above choice of gradings  we  shall denote $\operatorname{PT}(M):=\operatorname{PT}(\smoothing(M),\smooth(M))$ and $\operatorname{Reg}(M):=\operatorname{Reg}(\smoothing(M),\smooth(M))$.
\begin{prop}\label{regularity} Let $\smoothing(M)$ and $\smooth(M)$ be graded as above then:
\begin{enumerate}
\item  A smooth map $\phi:\smoothing(M)\rightarrow \smooth(M) $ is a regular map then $\phi(T_{\ep})\in\maG^{\infty}(M)$ for any  moderate approximate unit $T_{\ep}$.
\item All polynomially tame  maps $\phi\in\Op{PT}(M)$  belong to $E(\maU)$.
 \item For any distribution $u\in \maD'(M)$ the image $\Theta_u:\smoothing(M)\rightarrow \smooth(M)$ is a polynomially tame map. In fact
\[u\in H^k(M)\leftrightarrow \Theta_u\in\Op{PT}^{-k}(M).\]
\end{enumerate}
\end{prop}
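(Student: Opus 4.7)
The plan is to treat the three items in turn, exploiting the definition of moderate nets together with the two gradings: the Sobolev grading on $\smooth(M)$ given by $\|f\|_n=\|(1+\Delta)^{n/2}f\|_{L^2}$ and the mixed Hilbert-Schmidt grading on $\smoothing(M)$. The input from moderateness is that for every $T_{\ep}\in\maU$ and every integer $m$ there is an exponent $N_m$ with $\|T_{\ep}\|_m=O(\ep^{-N_m})$ as $\ep\rightarrow 0$.

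For (2), if $\phi\in\Op{PT}^r(M)$ with parameters $b$ and $k$, then for each $n\geq b+|r|$ the tameness inequality combined with the moderateness bound gives $\|\phi(T_{\ep})\|'_n\leq C\|T_{\ep}\|^k_{n+r}=O(\ep^{-kN_{n+r}})$, which is the defining condition for $\phi(T_{\ep})\in E_{\smooth(M)}$, so $\phi\in E(\maU)$. For (1), regularity means that $b$ and $k$ can be chosen so that \eqref{tamer} holds for every $r\in\ZZ$. Given $n\geq b$, apply this with $r=b-n$ (which satisfies $n\geq b+|r|$), obtaining $\|\phi(T_{\ep})\|'_n\leq C_n\|T_{\ep}\|^k_b=O(\ep^{-kN_b})$; for $n<b$ use the monotonicity $\|\phi(T_{\ep})\|'_n\leq\|\phi(T_{\ep})\|'_b$. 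The crucial point is that the exponent $kN_b$ is independent of $n$, which is precisely the condition for $\phi(T_{\ep})\in\maG^{\infty}(M)$.

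For the forward direction of (3), assume $u\in H^k(M)$. Inserting $(1+\Delta)^{-k/2}(1+\Delta)^{k/2}$ between $T$ and $u$ and using that the operator norm is bounded by the Hilbert-Schmidt norm yields
\[\|\Theta_u(T)\|'_n=\|T(u)\|_{H^n}\leq\|(1+\Delta)^{n/2}T(1+\Delta)^{-k/2}\|_{\operatorname{HS}}\,\|u\|_{H^k}.\]
For $n\geq 2k$ the pair $(p,q)=(n,-k)$ is admissible in the defining sum of $\|T\|_{n-k}$, so the Hilbert-Schmidt norm above is one of its summands. Thus $\|\Theta_u(T)\|'_n\leq\|u\|_{H^k}\|T\|_{n-k}$ for the required range of $n$, with $b=k$, $r=-k$, and linear exponent, giving $\Theta_u\in\Op{PT}^{-k}(M)$.

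The main obstacle is the converse. The plan is to test $\Theta_u$ against smoothing operators built from the spectral resolution of $\Delta$: the rank-one projectors $P_j$ onto the eigenfunctions $\phi_j$ and, more importantly, the finite-rank spectral-band projectors $P_{[a,b]}:=\sum_{\lambda_i\in[a,b]}P_i$. A direct computation gives $\|\Theta_u(P_j)\|'_n=|\langle u,\phi_j\rangle|(1+\lambda_j)^{n/2}$ and $\|P_j\|_m=(3m+1)(1+\lambda_j)^{m/2}$, so the rank-one test already forces the pointwise bound $|\langle u,\phi_j\rangle|\leq C(1+\lambda_j)^{-k/2}$. The delicate step is to upgrade this to the $\ell^2$-summability $\sum_j(1+\lambda_j)^k|\langle u,\phi_j\rangle|^2<\infty$ characterising $H^k(M)$. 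For this, the plan is to play the polynomial tameness inequality for $T=P_{[N/2,N]}$ against Weyl's asymptotic $\#\{\lambda_i\leq N\}\sim N^{\dim M/2}$, and to exploit the freedom of the inner index $(p,q)$ with $p+q=n-k$ in the definition of $\|P_{[N/2,N]}\|_{n-k}$ to extract the summability; carefully matching the growth of both sides over dyadic spectral bands is where the real work lies.
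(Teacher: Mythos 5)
Your treatment of parts (1) and (2), and of the forward implication in part (3), coincides with the paper's proof: the same choice $r=b-n$ to pin the right-hand side at the fixed level $\|T_{\ep}\|_b^k$ for regular maps, and the same insertion of $(1+\Delta)^{-k/2}(1+\Delta)^{k/2}$ followed by the operator-norm--Hilbert--Schmidt bound for $u\in H^k$. Your extra care about when the pair $(p,q)=(n,-k)$ is actually admissible in the sum defining $\|T\|_{n-k}$ (namely $n\geq 2k$) is a detail the paper glosses over, and it is harmless since tameness only requires the estimate for large $n$.

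The gap is the converse implication $\Theta_u\in\Op{PT}^{-k}(M)\Rightarrow u\in H^k(M)$, which you correctly isolate as the hard part but do not close --- and your proposed strategy will not close it. Testing against the spectral band projectors $P_{[N/2,N]}$ gives, after comparing both sides and invoking Weyl,
\[
\sum_{\lambda_j\in[N/2,N]}(1+\lambda_j)^k|\langle u,\phi_j\rangle|^2\;\leq\; C_n\,N^{d/2},
\]
which is exactly what the pointwise bound $|\langle u,\phi_j\rangle|\leq C(1+\lambda_j)^{-k/2}$ from the rank-one test already yields, and whose sum over dyadic $N$ diverges; the freedom in the inner index $(p,q)$ does not help, since band projectors are diagonal and every admissible pair produces the same quantity $\sum_{\lambda_j\in[N/2,N]}(1+\lambda_j)^{n-k}$. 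More generally, because $\Theta_u$ is linear, scaling $T\mapsto tT$ forces the tameness exponent to be $1$, and diagonal tests then only ever recover the supremum bound on the coefficients, which gives $u\in H^{k-d/2-\delta}$, not $u\in H^k$; rank-one tests with spectrally localized factors improve this to $u\in H^{k-\delta}$ for all $\delta>0$, but the passage from band-by-band control to the full $\ell^2$ sum loses a factor that does not vanish. So either a genuinely different argument is needed or the equivalence only holds up to an arbitrarily small loss of Sobolev order. You should be aware that the paper itself is silent here: its proof of part (3) establishes only the forward implication $u\in H^k(M)\Rightarrow\Theta_u\in\Op{PT}^{-k}(M)$ and never addresses the stated biconditional, so on this point your proposal is at least as complete as the source, and more honest about where the difficulty lies.
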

\begin{proof}We recall that $\maG^{\infty}(M)$ is a subalgebra of  the  the special algebra $\maG^s(M)$   such that an  element $x$ is moderate of same order  with respect to all seminorms on $\smooth(M)$.
\begin{enumerate}
\item By assumption  $T_{\ep}$ is a moderate net of smoothing operator hence for any $b$ there exists an $M$ such that  $\|T_{\ep}\|_b^k\sim O(\ep^M)$. Since $\phi\in\operatorname{Reg}(M)$ by definition there exist $k,b$  satisfying \eqref{tamer} for any $r$.In particular set $r=b-n$ for $n$  large enough we have
\[\|\phi(T_{\ep})\|_n\leq C\|T_{\ep}\|_b^k\sim O(\ep^M).\]
 Thus the net $\phi(T_{\ep})$  is in $\maG^{\infty}(M)$.
\item  Again follows from moderateness of $T_{\ep}\in \maU$.
\item  Let $u\in H^k(M)$  be a distribution then:
\begin{align*}
\|\Theta_u(T)\|_n&=\|T(u)\|_n=\|(1+\Delta)^{\frac{n}{2}}T(u)\|_{L^2(M)}\\
&=\|(1+\Delta)^{\frac{n}{2}}T(1+\Delta)^{-\frac{k}{2}}\left( (1+\Delta)^{\frac{k}{2}}(u)\right)\|_{L^2(M)}\\
&\leq \|(1+\Delta)^{\frac{n}{2}}T(1+\Delta)^{-\frac{k}{2}}\|_{HS}\|(1+\Delta)^{\frac{k}{2}}(u)\|_{L^2(M)}\\
&=C\|T\|_{n-k}.
      \end{align*}
\end{enumerate}
\end{proof}
\begin{cor}\label{MO}
The only distributions which  give rise to regular maps are smooth functions. That is
\[\maD'(M)\cap \operatorname{Reg}(M)=\smooth(M).\]
\end{cor}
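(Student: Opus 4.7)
My plan is to derive the corollary as a consequence of Proposition \ref{regularity}(3), the observation $\Op{Reg}(X,Y)\subseteq \bigcap_{r\in\mathbb{Z}}\Op{PT}^r(X,Y)$ noted immediately after the definition of regular map, and the Sobolev embedding $\bigcap_{k\geq 0}H^k(M)=\smooth(M)$ on a closed compact manifold. The substantive content of the statement is the inclusion $\maD'(M)\cap \Op{Reg}(M)\subseteq \smooth(M)$.

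For this inclusion, I would take $u\in\maD'(M)$ with $\Theta_u\in\Op{Reg}(M)$. Since the definition of regularity demands that the tameness estimate \eqref{tamer} hold for every $r\in\mathbb{Z}$ with a single uniform pair $(k,b)$, in particular $\Theta_u\in\Op{PT}^{-k}(M)$ for every $k\in\mathbb{N}$. The equivalence $u\in H^k(M)\Leftrightarrow\Theta_u\in\Op{PT}^{-k}(M)$ from Proposition \ref{regularity}(3) then forces $u\in H^k(M)$ for every $k\geq 0$, and the Sobolev embedding theorem delivers $u\in\smooth(M)$.

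For the reverse inclusion $\smooth(M)\subseteq\Op{Reg}(M)$, let $f\in\smooth(M)$, so that $\|f\|_{H^k}<\infty$ for every $k$. The estimate produced in the proof of Proposition \ref{regularity}(3), namely $\|\Theta_f(T)\|_n\leq \|f\|_{H^k}\|T\|_{n-k}$, is linear in $\|T\|_{n-k}$, so the tameness power $1$ is uniform in the degree of tameness $r=-k$. Combined with the finiteness of $\|f\|_{H^k}$ for every $k$, this yields $\Theta_f\in\Op{Reg}(M)$.

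The point requiring care is the threshold condition $n\geq b+|r|$ in the definition of regular. The summand $\|(1+\Delta)^{n/2}T(1+\Delta)^{-k/2}\|_{\Op{HS}}$ enters $\|T\|_{n-k}$ only when the pair $(p,q)=(n,-k)$ satisfies the grading constraint $p,q\geq -(n-k)$, equivalently $n\geq 2k$; this matches the form $n\geq b+|r|$ with the appropriate $b$, and is the main quantitative step in the $\supseteq$ direction.
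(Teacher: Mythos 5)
Your forward inclusion $\maD'(M)\cap\Op{Reg}(M)\subseteq\smooth(M)$ takes a genuinely different route from the paper: you go through Proposition \ref{regularity}(3) ($\Theta_u\in\Op{PT}^{-k}(M)$ for all $k$, hence $u\in\bigcap_k H^k(M)=\smooth(M)$), whereas the paper invokes Proposition \ref{regularity}(1) together with the tameness identity $F_{\ep}(\Delta)(\maD'(M))\cap\maG^{\infty}(M)=\smooth(M)$ imported from \cite{dave}. Your route stays inside the paper's own propositions, which is attractive, but be aware that it uses precisely the implication $\Theta_u\in\Op{PT}^{-k}(M)\Rightarrow u\in H^k(M)$, i.e.\ the half of the stated equivalence in Proposition \ref{regularity}(3) that the paper's proof of that proposition never establishes (it only proves $u\in H^k(M)\Rightarrow\|\Theta_u(T)\|_n\leq C\|T\|_{n-k}$). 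If you rely on the equivalence you should either prove that converse or say explicitly that you are quoting it.

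The reverse inclusion is where there is a genuine gap. What your estimate $\|\Theta_f(T)\|_n\leq\|f\|_{H^k}\|T\|_{n-k}$, valid for $n\geq 2k$, actually delivers is $\Theta_f\in\Op{PT}^{-k}(M)$ for every $k$, i.e.\ $\Theta_f\in\bigcap_r\Op{PT}^r(M)$ --- and the paper only asserts $\Op{Reg}(M)\subseteq\bigcap_r\Op{PT}^r(M)$, not equality. Membership in $\Op{Reg}(M)$ requires a \emph{single} $b$ such that \eqref{tamer} holds for all $r$ in the range $n\geq b+|r|$; your threshold $n\geq 2k=2|r|$ corresponds to $b=|r|$, which grows with $r$, so "this matches the form $n\geq b+|r|$ with the appropriate $b$" is exactly the assertion that fails. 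Nor is this merely a defect of the method: for the rank-one operator $T_i=\langle\cdot,\phi_1\rangle\phi_i$ one computes $\|T_if\|_n=|\langle f,\phi_1\rangle|(1+\lambda_i)^{n/2}$ while every summand of $\|T_i\|_{n-k}$ is bounded by a constant times $(1+\lambda_i)^{n-k}$, so the estimate $\|T_if\|_n\leq C_n\|T_i\|_{n-k}$ genuinely fails as $i\to\infty$ whenever $n<2k$ and $\langle f,\phi_1\rangle\neq 0$. So with the definitions read literally, no nonzero smooth function passes the uniform-$b$ test, and your argument cannot close this half. (The paper's own proof of this half is the one-line claim that it is "obvious from the proof of part (2)", so the obstruction is arguably inherited from an imprecision in the definition of $\Op{Reg}$ --- e.g.\ the threshold should perhaps be $b$ depending on $r$, or the grading on $\smoothing(M)$ adjusted --- but since you single this out as the main quantitative step, the claim you make about it needs to be corrected rather than asserted.)
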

\begin{proof}
Let  $T_{\ep}=F_{\ep}(\Delta)$  then we know that (see\cite{dave}) 
\[F_{\ep}(\Delta)(\maD'(M))\cap \maG^{\infty}(M)=\smooth(M).\]
Hence by part (1) of Proposition  \ref{regularity} if  $\Theta_u$ is  in $\operatorname{Reg}(M)$ then $u$ must be smooth.
Also from the proof of part  (2) it is obvious that any smooth function  defines a map in $\operatorname{Reg}(M)$.
\end{proof}
In view of the above result we regard the  subalgebra  $\operatorname{Reg}(M)$  as an analogue of Oberguggenberger's algebra $\maG^{\infty}(M)$.
It provides some regularity features  to the  space  $\maG_{\maL}(M)$ consistant with the regularity  of distributions. It is clear that $\operatorname{PT}(M)$ and $\operatorname{Reg}(M)$  are  modules over $\smooth(M)$  with the  natural  action as
\[f.\psi(T):=\Theta_f\cdot \psi(T)\quad \forall f\in\smooth(M)\,\,\psi\in PT(M).\]
Note  that this module action restricted to $\maD'(M)\hookrightarrow \Op{PT}(M)$  is not the usual  module action  of $\smooth(M)$ on distributions. Let $M_f$ denote  multiplication  by a smooth function $f$ on space of  distributions $\maD'(M)$. Then  we define
\begin{eqnarray}\label{module}
M_f.\phi(T):=\phi(T.M_f)\quad \phi\in E(\maU)\, T\in\smoothing(M).
\end{eqnarray}
\begin{definition}
The support of $\phi\in E(\maU_{\Op{loc}})$   is the  complement of the  biggest open set $U\subseteq M$ such that $f.\phi\in N(\maU_{\Op{loc}})$ for any function $f$ supported in $U$.
 The singular  support of $\phi\in E(\maU)$   is the  complement of the  biggest open set $U\subseteq M$ such that $M_f.\phi\in \operatorname{Reg}(M)$ for any function $f$ supported in $U$.
\end{definition}
\begin{lem}
For a distribution $u\in \maD'(M)$ and any local moderate approximate unit $T_{\ep}$, the following are true.
\begin{enumerate}[(a)]
\item $\operatorname{supp}u=\operatorname{supp}\Theta_u=\operatorname{supp}T_{\ep}u.$
\item $\operatorname{singsupp}u=\operatorname{singsupp}\Theta_u=\operatorname{singsupp}T_{\ep}u.$
\end{enumerate}
\end{lem}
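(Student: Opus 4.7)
The plan is to reduce all four equalities in (a) and (b) to Proposition \ref{local}, which already handles the outer equalities $\operatorname{supp}(u) = \operatorname{supp}(T_\ep u)$ and $\operatorname{singsupp}(u) = \operatorname{singsupp}(T_\ep u)$. Thus only the middle equalities $\operatorname{supp}(u) = \operatorname{supp}(\Theta_u)$ and $\operatorname{singsupp}(u) = \operatorname{singsupp}(\Theta_u)$ require work, and for each I will unfold the relevant $\smooth(M)$-module action and identify it with a classical distributional operation on $u$.

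For the support equality in (a), I unfold the first module action $f\cdot\Theta_u(T) = \Theta_f(T)\cdot\Theta_u(T) = T(f)\cdot T(u)$. Given $f$ with $\operatorname{supp}(f)\cap\operatorname{supp}(u)=\emptyset$, I pick $\delta>0$ smaller than the distance between these supports and invoke Definition \ref{cra}(3) to write $T_\ep = L_\ep + N_\ep$ with $L_\ep$ supported in a $\delta$-neighborhood of the diagonal and $N_\ep$ negligible. Then $L_\ep(f)$ and $L_\ep(u)$ have disjoint supports, so $L_\ep(f)\cdot L_\ep(u)=0$, while the three cross-terms are products of moderate and negligible nets in the graded algebra $\smooth(M)$, hence negligible. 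Conversely, if $f\cdot\Theta_u \in N(\maU_{\Op{loc}})$ then $T_\ep(f)\cdot T_\ep(u) \to 0$, but since $T_\ep(f)\to f$ in $\smooth(M)$ and $T_\ep(u)\to u$ in $\maD'(M)$ by Definition \ref{mau}, the product converges to $fu$ in $\maD'(M)$, forcing $fu=0$. Ranging over all $f$ supported in an open set $U$ yields $\operatorname{supp}(\Theta_u) = \operatorname{supp}(u)$.

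For the singular support equality in (b), I use the second module action from \eqref{module}:
\[
M_f\cdot\Theta_u(T) \;=\; \Theta_u(T\cdot M_f) \;=\; T(fu) \;=\; \Theta_{fu}(T),
\]
so $M_f\cdot\Theta_u = \Theta_{fu}$. By Corollary \ref{MO}, $\Theta_{fu}\in\operatorname{Reg}(M)$ is equivalent to $fu\in\smooth(M)$. Hence $\operatorname{singsupp}(\Theta_u)$ is the complement of the largest open $U\subseteq M$ such that $fu\in\smooth(M)$ for every $f$ supported in $U$. A standard distribution-theoretic argument identifies this with the classical singular support: if $u|_U$ is smooth and $\operatorname{supp}(f)\subseteq U$ then $fu$ is smooth on all of $M$; conversely, for any $x_0\in U$ pick $f$ with $f\equiv 1$ near $x_0$ and $\operatorname{supp}(f)\subseteq U$ to conclude that $u$ is smooth near $x_0$.

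The main obstacle I anticipate is the converse direction of the support equality, where one must pass from negligibility of the pointwise product $T_\ep(f)\cdot T_\ep(u)$ in $\smooth(M)$ to a conclusion about the distribution $fu$. This needs the (routine but easily mishandled) observation that if $g_\ep\to g$ in $\smooth(M)$ and $v_\ep\to v$ in $\maD'(M)$ then $g_\ep v_\ep \to gv$ in $\maD'(M)$, so that negligibility (which in particular forces convergence to $0$) actually pins down the distributional limit. The rest of the argument is essentially bookkeeping with the decomposition in Definition \ref{cra}(3) and the already-proved Corollary \ref{MO}.
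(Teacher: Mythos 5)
Your proof is correct, and for part (b) and the two outer equalities it coincides with the paper's: the paper likewise reduces $\operatorname{supp}u=\operatorname{supp}T_{\ep}u$ and $\operatorname{singsupp}u=\operatorname{singsupp}T_{\ep}u$ to Proposition \ref{local}, and handles the singular support of $\Theta_u$ exactly via $M_f\Theta_u=\Theta_{fu}$ together with Corollary \ref{MO}. The one place you diverge is the middle equality in (a): the paper simply asserts ``by definition $\operatorname{supp}\Theta_u=\cap_{T_{\ep}\in\maU_{\Op{loc}}}\operatorname{supp}T_{\ep}u$'' and stops, which glosses over the fact that the module action producing $\operatorname{supp}\Theta_u$ involves the pointwise product $T_{\ep}(f)\cdot T_{\ep}(u)$ rather than $T_{\ep}(fu)$. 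Your argument supplies exactly the missing content: the forward direction via the decomposition $T_{\ep}=L_{\ep}+N_{\ep}$ of Definition \ref{cra}(3) plus the moderate-times-negligible ideal property, and the converse via the observation that $T_{\ep}(f)\cdot T_{\ep}(u)\to fu$ distributionally (which does hold, by equicontinuity of a convergent net of distributions, so negligibility forces $fu=0$). So your route is a legitimate, more self-contained substitute for the paper's one-line claim; what it costs is the extra limit argument, and what it buys is that you never need to first make sense of $\operatorname{supp}$ of the Colombeau class $T_{\ep}u$ and intersect over all local units.
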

\begin{proof}
The statements $ \operatorname{supp}u=\operatorname{supp}T_{\ep}$ and $ \operatorname{singsupp}u=\operatorname{singsupp}T_{\ep}u$ follow from Lemma \ref{local}. By definition $\operatorname{supp}\Theta_u=\cap_{T_{\ep}\in\maU_{loc}}\operatorname{supp}T_{\ep}u$. 

Also it is immediate from the definition that $M_f\Theta_u=\Theta_{fu}$ Hence the  equality  of  singularsupport follows from Corollary \ref{MO}
\end{proof}

Now if $P$ is a pseudodifferential operator of order $m$ then we study the action of $P$  on polynomially tame  maps. First a quick observation that right multiplication  by $P$  on $\smoothing(M)$  namely the map  $T\rightarrow TP$  is tame of tameness $m$. To see this one notes that the operator $(1+\Delta)^{\frac{m}{2}}$ generates   $\Psi^m(M)$  as a  left-module (and also right-module) over $\Psi^0(M)$. Thus . Therefore we set set
\[P=P_0(1+\Delta)^{\frac{m}{2}}\quad P_0 \in\Psi^0(M).\]
 By the same token  for $k$ an integer multiple of $\frac{1}{2}$  we find an order $0$ operator $T_k$ such that
\[[P_0,(1+\Delta)^k]=(1+\Delta)^{k-1}T_k.\]
Puttine this together we have
\begin{align*}
\|TP\|_{p,q}&=\|(1+\Delta)^{\frac{q}{2}}TP(1+\Delta)^{\frac{p}{2}}\|_{HS}=\|(1+\Delta)^{\frac{q}{2}}TP_0(1+\Delta)^{\frac{p+m}{2}}\|_{HS}\\
&\leq \|(1+\Delta)^{\frac{q}{2}}T(1+\Delta)^{\frac{p+m}{2}}P_0\|_{HS}+\\
&\qquad\qquad\|(1+\Delta)^{\frac{q}{2}}T[P_0,(1+\Delta)^{\frac{p+m}{2}]}\|_{HS}\\
&\leq \|P_0\|\|(1+\Delta)^{\frac{q}{2}}T(1+\Delta)^{\frac{p+m}{2}}\|_{HS}+\\
&\qquad\qquad\|T_{\frac{p+m}{2}}\|\|(1+\Delta)^{\frac{q}{2}}T(1+\Delta)^{\frac{p+m-1}{2}}\|_{HS}\\
&\leq C\|T\|_{p+q+m}
\end{align*}
In particular this  implies that
\[\|TP\|_n\leq C\|T\|_{n+m}.\]

If $\phi\in\Op{PT}^r(X,Y)$  and $\tau\in\Op{PT}^m(Y,Z)$ then it is obvious that $\tau\circ\phi\in\\Op{PT}^{r+m}(X,z)$  Hence we have the following result.
\begin{prop}
Let $P$ be an order $m$ pseudodifferential operator than  it extends to a map $P\Op{PT}^r(M)\rightarrow \Op{PT}^{r+m}(MM)$  by the map $P(\phi)(T):=\phi(TP)$ as defined in \eqref{pseudo}.
\end{prop}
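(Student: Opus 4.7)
The plan is to factor the action as precomposition with right multiplication. Define $R_P:\smoothing(M)\rightarrow \smoothing(M)$ by $R_P(T):=TP$; this is well-defined because $\smoothing(M)$ is a two-sided ideal in $\Psi^{\infty}(M)$, and it is Fréchet smooth because right multiplication by a fixed operator is continuous linear. With this notation the definition \eqref{pseudo} reads $P(\phi)=\phi\circ R_P$, so the proposition reduces to two ingredients: (i) $R_P$ is polynomially tame of degree $m$, and (ii) polynomially tame maps compose with tameness degrees adding.

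Ingredient (i) is essentially completed in the Hilbert--Schmidt computation immediately preceding the proposition, where the commutator argument between $P_0\in\Psi^0(M)$ and $(1+\Delta)^{\frac{p+m}{2}}$ yields $\|TP\|_n\leq C\|T\|_{n+m}$. In the language of the definition of polynomial tameness this says $R_P$ is tame of degree $m$ with multiplicative exponent $k=1$. Ingredient (ii) is then a direct substitution: if $\phi\in\Op{PT}^r(M)$ with base $b$, exponent $k$, and constant $C_1$, then for $n\geq b+|r|$,
\[
\|P(\phi)(T)\|'_n=\|\phi(TP)\|'_n\leq C_1\|TP\|_{n+r}^k\leq C_1 C^k\|T\|_{n+r+m}^k.
\]
Choosing a new base $b'\geq b+|r|+|r+m|$ ensures the inequality holds whenever $n\geq b'+|r+m|$, which is exactly the statement $P(\phi)\in\Op{PT}^{r+m}(M)$.

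The only mild bookkeeping concerns the grading on $\smoothing(M)$, whose seminorm $\|T\|_n$ is a sum over $p+q=n$ with $p,q\geq -n$: one must check that the estimate in step (i) is uniform across the summands so that it transfers to the $\|\cdot\|_{n+m}$ norm on the right-hand side. This is immediate, however, since the commutator computation is carried out termwise for general $p,q$. I do not anticipate a substantive obstacle; all the analytic content lives in the preceding commutator estimate, and what remains is index-tracking for the tameness thresholds.
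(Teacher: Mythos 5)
Your proposal is correct and follows essentially the same route as the paper: the text preceding the proposition establishes exactly your ingredient (i), namely the Hilbert--Schmidt commutator estimate $\|TP\|_n\leq C\|T\|_{n+m}$ showing that $T\mapsto TP$ is tame of degree $m$, and then invokes your ingredient (ii), that composition of polynomially tame maps adds the tameness degrees. Your write-up merely makes the composition step and the threshold bookkeeping more explicit than the paper does.
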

Now in  exact analogy with classical notion of wavefront set of a distribution  to the wavefront set of a generalized function as follows. Recall that for a pseudodifferential operator $P$  let $\sigma(P)$ denote the principal symbol of $P$.  Then  we denote as usual the characteristic set of $P$ by $\Op{Char}(P)=\sigma^{-1}\{0\}\subseteq T^*M$. We denote  by $\Psi^M_{cl}(M)$ all classical pseudodifferential operator of order $M$.
\begin{definition}
The wavefront set of a generalized function $\phi\in E(\maU)$  would be given by:
\[WF_R(\phi):=\bigcap_{P\phi\in\Op{Reg}(M)} \Op{Char}(P),\quad P \in\Psi^0_{cl}(M).\]
\end{definition}
\begin{prop}
Let $u\in \maD'(M)$  be a distribution then
\[WF_R(\Theta_u)=WF \, u.\]
\end{prop}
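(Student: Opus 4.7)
The plan is to reduce the equality $WF_R(\Theta_u)=WF\,u$ to two ingredients that have already been set up in the paper: the intertwining relation $P\Theta_u=\Theta_{Pu}$ established in Section \ref{operations}, and Corollary \ref{MO} which identifies $\maD'(M)\cap \operatorname{Reg}(M)$ with $\smooth(M)$. Since the classical wavefront set admits the characterization
\[WF(u)=\bigcap_{Pu\in\smooth(M)}\operatorname{Char}(P),\qquad P\in\Psi^0_{cl}(M),\]
the whole point is to show that the indexing family on the right agrees with the one appearing in the definition of $WF_R(\Theta_u)$.

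First I would fix an arbitrary $P\in\Psi^0_{cl}(M)$ and apply the identity $P\Theta_u=\Theta_{Pu}$ from \eqref{pseudo}; note that $Pu$ is a well-defined distribution on $M$ since $P$ acts continuously on $\maD'(M)$. Corollary \ref{MO} then yields the equivalence
\[\Theta_{Pu}\in\operatorname{Reg}(M)\iff Pu\in\smooth(M),\]
because $\Theta_{Pu}$ lies in the image of $\maD'(M)\hookrightarrow \operatorname{PT}(M)$. Consequently the two families
\[\{P\in\Psi^0_{cl}(M): P\Theta_u\in\operatorname{Reg}(M)\}\quad\text{and}\quad \{P\in\Psi^0_{cl}(M): Pu\in\smooth(M)\}\]
coincide, and intersecting their characteristic sets produces the same subset of $T^*M\setminus 0$, giving $WF_R(\Theta_u)=WF\,u$.

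The only place that requires attention is the use of Corollary \ref{MO}: one must check that the equivalence $\Theta_v\in\operatorname{Reg}(M)\Leftrightarrow v\in\smooth(M)$ applies to $v=Pu$, and that the forward implication (which is the substantive direction) remains available under the action of the pseudodifferential operator. This is immediate since Corollary \ref{MO} works for any distribution and depends only on the tameness property \eqref{cap} of the local approximate unit $F_{\ep}(\Delta)$, not on properties of $u$. The backward implication, that smooth functions give maps in $\operatorname{Reg}(M)$, was verified in the proof of Proposition \ref{regularity}(3) by the explicit Sobolev estimate. No new technical obstacle arises; the mild one is only bookkeeping the quantifier, namely verifying that a covector $(x,\xi)$ is omitted by $WF_R(\Theta_u)$ precisely when some $P\in\Psi^0_{cl}(M)$ elliptic at $(x,\xi)$ renders $P\Theta_u$ regular, which by the chain above is precisely when $Pu\in\smooth(M)$.
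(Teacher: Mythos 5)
Your proposal is correct and follows essentially the same route as the paper's own proof: both rest on the identity $P\Theta_u=\Theta_{Pu}$ together with Corollary \ref{MO} to identify the family $\{P: P\Theta_u\in\operatorname{Reg}(M)\}$ with $\{P: Pu\in\smooth(M)\}$, and then invoke the classical characterization of $WF(u)$ as the intersection of characteristic sets over that family. No substantive difference.
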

\begin{proof}
Since $P\Theta_u=\Theta_{Pu}$ it follows from Corollary \ref{MO} that  
\[P\Theta_u\in\operatorname{Reg}\longleftrightarrow\Theta_{Pu}\in \operatorname{Reg}(M)\longleftrightarrow Pu\in\smooth(M).
\]
Hence
\begin{align*}
WF_R(\Theta_u)&=\bigcap_{P\Theta_u\in\Op{Reg}(M)} \Op{Char}(P)\\
&=\bigcap_{Pu\in\smooth(M)} \Op{Char}(P)=WF(u).
\end{align*}
\end{proof}

%%%%%%%%%%%%%%%%%%%%%%%%%%%%%%%%%%%%%%%%%%%%%%%%%%%%%%%%%%%%%%%%%%%%%%%%%%%%%%%%%%%%%%%%%%%%%%%%%%%%%%%%%%%%%%%%%%%%%%%%%%%%%%%%%%%%%%%%%%%%%%%%%%%%%%%%%%%%%%%%%%%%%%%%%%%%%%%%%%%%%%%%%%%%%%%%%%%%%%%%%%%%%%%%%%%%%%%%%%%%%%%%%%%%%%%%%%%%%%%%%%%%%%%%%%%%%%%%%\\

%%%%%%%%%%%%%%%%%%%%%%%%%%%%%%%%%%%%%%%%%%%%%%%%%%%%%%%%%%%%%%%%%%%%%%%%%%%%%%%%%%%%\\

%%%%%%%%%%%%%%%%%%%%%%%%%%%%%%%%%%%%%%%%%%%%%%%%%%%%%%%%%%%%%%%%%%%%%%%%%%%%%%%%%%%%%%%%%%

%%%%%%%%%%%%%%%%%%%%%%%%%%%%%%%%%%%%%%%%%%%%%%%%%%%%%%%%%%%%%%%%%%%%%%%%%%%%%

\end{document}